\newcommand{\sba}[1][n]{\ensuremath{b^{\prime}_{#1}}}
\newcommand{\sbaA}[1][n]{\ensuremath{b^{\prime,\mathcal{A}}_{#1}}}
\newcommand{\sbak}[1][n]{\ensuremath{b^{\prime,k}_{#1}}}
\newcommand{\Zatow}{\ensuremath{\mathbb{Z}[a^{\pm 1},b^{\pm 1},c^{\pm 1},d^{\pm 1} ,x^{\pm 1},y^{\pm 1},z^{\pm 1},w^{\pm 1}]}}
\newcommand{\iden}{{\rm 1\hspace*{-0.4ex}%
\rule{0.1ex}{1.52ex}\hspace*{0.2ex}}}
\DeclareMathOperator\TL{TL}
\DeclareMathOperator\End{End}
\DeclareMathOperator\op{op}
\newtheorem*{rep@theorem}{\rep@title}
\newcommand{\newreptheorem}[2]{%
\newenvironment{rep#1}[1]{%
 \def\rep@title{#2 \ref{##1}}%
 \begin{rep@theorem}}%
 {\end{rep@theorem}}}
\theoremstyle{plain} \newtheorem{theorem}{Theorem}[section]
                     \newtheorem*{introthm}{Theorem}
                     \newtheorem{prop}[theorem]{Proposition}
                     \newtheorem{lemma}[theorem]{Lemma}
\theoremstyle{definition} \newtheorem{define}[theorem]{Definition}
                          \newtheorem{remark}[theorem]{Remark}
\title{A Tensor Space Representation of \\ the Symplectic Blob Algebra}
\author{Andrew Reeves}
\begin{document}

\maketitle

\begin{abstract}
The symplectic blob algebras $\{ \sba \}$ are a family of finite dimensional noncommutative algebras over $\mathbb{Z}[X_1,X_2,X_3,X_4,X_5,X_6]$ that can be defined in terms of planar diagrams in a way that extends the Temperley-Lieb and (ordinary) blob algebras.

In this paper we construct a new ``tensor space'' representation of the symplectic blob algebra $\mathcal{A} \otimes \sba$ for each $n \in \mathbb{N}$, for $\mathcal{A}$ a particular commutative ring with indeterminates.  The form of this representation is motivated by the XXZ representation of the Temperley-Lieb algebra $\TL_n$ \cite{jimbo86} and the related Martin-Woodcock representation of the blob algebra $b_n$ \cite{martinwoodcock2003}.  For $k$ an algebraically closed field, and for any $(\delta,\delta_L,\delta_R,\kappa_L,\kappa_R,\kappa) \in k^{6}$, the algebra $\sba$ specialises to a $k$-algebra $\sba(\delta,\delta_L,\delta_R,\kappa_L,\kappa_R,\kappa)$.  For any such specialisation, our representation passes to a $\sba(\delta,\delta_L,\delta_R,\kappa_L,\kappa_R,\kappa)$-module  $\mathcal{V}(n)$. 

In a second paper, we will show that this module $\mathcal{V}(n)$ is full-tilting (and hence faithful) whenever $\sba(\delta,\delta_L,\delta_R,\kappa_L,\kappa_R,\kappa)$ is quasihereditary.  This paper focuses on establishing that $\mathcal{V}(n)$ exists.
\end{abstract}

\section{Introduction}

Let $k$ be an algebraically closed field of characteristic $0$, and (for some $n \in \mathbb{N}$) let $\mathbb{K} := \mathbb{Z}[X_1, \ldots, X_n]$.  In order to understand the representation theory of algebras over $k$, it can often be useful to consider related algebras defined over $\mathbb{K}$. In particular, suppose that we are given some algebra $A$ over $\mathbb{K}$.  Every map $\iota : \{ X_1, \ldots, X_n \} \rightarrow k$ extends to an algebra morphism $\mathbb{K} \rightarrow k$, and thus gives $k$ the structure of a $\mathbb{K}$-algebra and a (right) $\mathbb{K}$-module.  So, for each such map, we can define a specialisation of $A$: 
\begin{displaymath}
 A^{\prime} := k \otimes_{\mathbb{K}} A \rm{,}
\end{displaymath}
which is a $k$-algebra.  (Of course, not all these specialisations will necessarily be unique.)

We can therefore associate to the original algebra $A$ an entire \emph{set} of algebras (over $k$), each indexed by an $n$-tuple of parameters in $k$.

In this paper we will consider a particular family of such (sets of) algebras: the symplectic blob algebras $\{ \sba \}$ \cite{greenmartinparker2007}.  These are algebras over $\mathbb{Z}[X_1, \ldots, X_6]$ that arise in statistical mechanics as extensions of the Temperley-Lieb and (ordinary) blob algebras (see, for example, \cite{degiernichols2009}; for the connections between diagram algebras and statistical mechanics in general see \cite{baxter} or \cite{martin2008}).  Almost all specialisations of the symplectic blob algebra to $k$ are semisimple and so, in at least some sense, fully understood, but understanding the non-generic representation theory remains an open problem.

The present paper is the first of a two-part series.  In this paper, for $\mathcal{A}$ a commutative ring which we shortly define, we construct a tensor space representation of each $\sba$ acting on an $\mathcal{A}$-module $V^{\otimes 4n}$.  We will show that for any specialisation $\sba(\uline{\Pi}) := k \otimes_{\mathcal{A}} \sba$ this representation passes to a $\sba(\uline{\Pi})$-module $\mathcal{V}(n)$.  In the sequel, we will focus on quasihereditary specialisations, and show that these modules are, in fact, full-tilting.

The main result of this paper is:

\begin{reptheorem}{thm:rep}
Fix $n \in \mathbb{N}$.  Let $\mathcal{A} = \Zatow{}$ and $\mathcal{Z} = \mathbb{Z}[D, D_L, D_R, K_L, K_R, K]$.  Fix an evaluation map $\theta : \mathcal{Z} \rightarrow \mathcal{A}$ that makes $\mathcal{A}$ a $\mathcal{Z}$-algebra.

Let $\mathcal{G}_n = \{ e, U_1, \ldots, U_{n-1}, f \}$ be the generators of $\sba$ given in Definition \ref{def:sbalgebra}.  Let $I_{n} = \{-2n+1,-2n+2\ldots,2n\}$.  Let $\{R^q_i\}_{i \in I_{2n}} \subset \End_{\mathcal{A}} (V^{\otimes4n})$ be the operators defined in Definition \ref{def:Rmatrices}.

Define a map $\mathcal{R} : \mathcal{G}_n \rightarrow \End_{\mathcal{A}}(V^{\otimes 4n})$ by 
 \begin{align}
  \mathcal{R}(U_i) &= R^a_{-n-i} R^b_{-n+i} R^c_{n-i} R^d_{n+i} \\
  \mathcal{R}(e) &= R^x_{-n} R^y_{n} \\
  \mathcal{R}(f) &= R^z_0 R^w_{2n} \rm{ .}  
 \end{align}
Then $\mathcal{R}$ extends to a unique representation of $\sbaA$, also called $\mathcal{R}$, if and only if the action of $\mathcal{Z}$ on $\mathcal{A}$ is such that:
 \begin{align}
  \theta(D) &= \left( a + \frac{1}{a} \right) \left( b + \frac{1}{b} \right) \left( c + \frac{1}{c} \right) \left( d + \frac{1}{d} \right)  \\
  \theta(D_L) &= \left( x + \frac{1}{x} \right) \left( y + \frac{1}{y} \right) \\
  \theta(D_R) &= \left( z + \frac{1}{z} \right) \left( w + \frac{1}{w} \right) \\
  \theta(K_L) &= \left( \frac{ab}{x} + \frac{x}{ab} \right) \left( \frac{cd}{y} + \frac{y}{cd} \right) \\
  \theta(K_R) &= \left( \frac{ad}{w} + \frac{w}{ad} \right) \left( \frac{bc}{z} + \frac{z}{bc} \right) \\
  \theta(K) &= \left\{ \begin{array}{lr}
                    \frac{xy}{zw} + 2 + \frac{zw}{xy} & \mbox{if } n \mbox{ is odd} \\
                    \frac{abcd}{xyzw} + 2 + \frac{xyzw}{abcd} & \mbox{if } n \mbox{ is even}
                   \end{array} \right. \rm{ .} 
 \end{align}
\end{reptheorem}

\subsection*{The XXZ Representation of $\TL_n$}

The motivation for our construction is the following result:

Let $V_2$ be the free $\mathbb{Z}[q,q^{-1}]$ module with basis $\{v_1, v_2\}$.  Then the $n$-fold tensor product
\begin{displaymath}
 V_2^{\otimes n} := \underbrace{V_2 \otimes_{\mathbb{Z}[q,q^{-1}]} \ldots \otimes_{\mathbb{Z}[q,q^{-1}]} V_2}_{\mbox{n times}}
\end{displaymath}
has basis $\mathcal{B}_n = \{v_{i_1} \ldots v_{i_n} \; | \; i_j \in \{ 1, 2 \}\}$.  Recall that the Temperley-Lieb algebra $\TL_n(q+q^{-1})$ over $\mathbb{Z}[q,q^{-1}]$ has generators $U_1, U_2, \ldots U_{n-1}$ satisfying the relations:
\begin{align}
U_j^2 &= \left( q + q^{-1} \right) U_j &\qquad \mbox{for all } j \\
U_j U_k U_j &= U_j &\qquad \mbox{if } |j-k| = 1 \\
U_j U_k &= U_k U_j &\qquad \mbox{if } |j-k| > 1 \rm{.}
\end{align}

There is an action of these generators $\{U_1, \ldots, U_{n-1}\}$ on $V_2^{\otimes n}$ which extends to a representation of the algebra.
\begin{introthm}
 For $j \in \{1, \ldots (n-1)\}$ and $v = v_{i_1} \ldots v_{i_n} \in \mathcal{B}_n$, define $U_j \circ v$ by
\begin{displaymath}
 U_j \circ v = \left\{ \begin{array}{cl}
                        q^{2 - i_{j+1}} v_{i_1} \ldots v_{i_{j-1}} v_{1} v_2 v_{i_{j+2}} \ldots v_{i_{n}} & \\
                         \qquad \qquad + q^{1 - i_{j}} v_{i_1} \ldots v_{i_{j-1}} v_{1} v_2 v_{i_{j+2}} \ldots v_{i_{n}} & \mbox{if } v_i \neq v_{i+1} \\
                        0 & \mbox{if } v_i = v_{i+1}
                       \end{array} \right.  \rm{.}
\end{displaymath}
This action extends to a unique representation
\begin{displaymath}
 \mathcal{J} : \TL_n(q+q^{-1}) \rightarrow \End(V_2^{\otimes n})  \rm{.}
\end{displaymath} \qed \end{introthm}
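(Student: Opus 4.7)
The plan is to check that the proposed action of each $U_j$ on $V_2^{\otimes n}$ satisfies the three defining relations of $\TL_n(q+q^{-1})$, and then to extend the assignment to a unique algebra morphism $\mathcal{J}$ by the universal property of the presentation.

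The first observation I would record is that the action is \emph{local}: each $U_j$ modifies only the $j$-th and $(j+1)$-th tensor factors, with coefficients depending only on $i_j$ and $i_{j+1}$. Formally this means $U_j$ factors as $\iden^{\otimes (j-1)} \otimes U \otimes \iden^{\otimes (n-j-1)}$ for a single operator $U \in \End(V_2 \otimes V_2)$. From this observation the far-commutation relation $U_j U_k = U_k U_j$ for $|j-k|>1$ is immediate, since the nontrivial factors of $U_j$ and $U_k$ then act in disjoint tensor positions.

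For the quasi-idempotent relation $U_j^2 = (q+q^{-1}) U_j$, locality reduces the verification to computing $U^2$ on the four basis vectors of $V_2 \otimes V_2$. On the ``aligned'' vectors $v_1 \otimes v_1$ and $v_2 \otimes v_2$ both sides vanish by the definition; on $v_1 \otimes v_2$ and $v_2 \otimes v_1$ a direct expansion produces a two-term linear combination whose coefficients one checks collapse to $(q+q^{-1})$ times the original $U$-action, using only that $q \cdot q^{-1} = 1$.

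The braid-like relation $U_j U_{j+1} U_j = U_j$ for adjacent $j$ is where I expect the main obstacle. Locality again reduces the check to the local three-factor operator on $V_2^{\otimes 3}$, so one enumerates the eight triples of basis vectors and computes both sides. Each intermediate application of $U$ can produce up to two terms, so in principle the triple composition yields up to eight terms per basis vector; one must track the cancellations carefully to recover exactly the single-generator action on the right-hand side. The fact that $U$ annihilates the aligned vectors $v_i \otimes v_i$ eliminates many of these terms in practice, reducing the analysis to a finite but nontrivial case check. Once all three relations are verified, uniqueness of the extension $\mathcal{J}$ is automatic because $U_1, \ldots, U_{n-1}$ generate $\TL_n(q+q^{-1})$ as an algebra.
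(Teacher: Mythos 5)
The paper does not prove this theorem; it cites it as a special case of Theorem~1 of Jimbo's 1986 paper and later, in Lemma~\ref{lem:TLlike}, merely records the two local identities $(R^q_m)^2 = [2]_q R^q_m$ and $R^q_m R^q_{m\pm1} R^q_m = R^q_m$ with a further citation in lieu of a proof. Your proposal supplies the standard direct verification those sources carry out, and its structure is exactly the one Lemma~\ref{lem:TLlike} encapsulates: reduce by locality to the operator $M$ on $V_2\otimes V_2$ (the $4\times 4$ matrix with nonzero block $\begin{smallmatrix} q & 1 \\ 1 & q^{-1}\end{smallmatrix}$), check $M^2 = (q+q^{-1})M$ and the three-site relation $M_1 M_2 M_1 = M_1$ by finite computation, observe far commutation from disjoint supports, and invoke the presentation to get a unique algebra morphism. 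This is correct and is essentially the proof the paper delegates to its references.

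One small inaccuracy worth fixing: in the three-site check you describe the mechanism as ``tracking cancellations,'' but nothing cancels. Because $M$ is rank one (writeable as $uv^{\mathsf T}$ with $u = q\,e_{12} + e_{21}$, $v = e_{12} + q^{-1}e_{21}$, so $v^{\mathsf T}u = q+q^{-1}$), the compositions simply collapse multiplicatively; the coefficients recombine to a single term per surviving basis vector rather than cancelling. You may also want to note in passing that the displayed formula in the theorem statement has two typos (the second summand should read $v_2 v_1$ in positions $j,j{+}1$, and the condition should be on $i_j$, $i_{j+1}$ rather than $v_i$, $v_{i+1}$); your computation should be run against the corrected formula, which matches Definition~\ref{def:Rmatrices} in the body of the paper.
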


This representation appears as a special case of Theorem 1 of \cite{jimbo86}.  It was previously an object of study in statistical mechanics: see, for example, \cite{baxter, temperleylieb71}.  It is sometimes known as the XXZ representation.  The XXZ representation is faithful and (for quasihereditary specialisations of the algebra) full-tilting \cite{martin2003, martin92, martinryomhansen2004}.

Motivated by this construction, in this paper we construct a tensor space representation $\mathcal{R}$ of the symplectic blob algebra $\sba$ over the commutative ring $\mathcal{A} = \Zatow$. 
For $V$ a free $\mathcal{A}$-module of rank $2$, and for any $n \in \mathbb{N}$, we give an action of $\sba = \sba(X_1, X_2, X_3, X_4, X_5, X_6)$ on an $\mathcal{A}$-module $V^{\otimes 4n} := V \otimes_{\mathcal{A}} \ldots \otimes_{\mathcal{A}} V$, for $\delta$ ,$\delta_L$,$\delta_R$,$\kappa_L$,$\kappa_R$,$\kappa \in \mathcal{A}$.  The form of this representation is suggested by the XXZ representation $\mathcal{J}$ of the Temperley-Lieb algebra $TL_n$, discussed above, and by an `unfolding' map given in \cite{greenmartinparker2007},

The representation $\mathcal{R}$ has several properties that are not shared by any other known tensor space representation of the algebra $\sba$.  Such representations arise naturally in statistical mechanics; examples include the (extended) Potts and XXZ representations (see, for example, \cite{nichols2006,degiernichols2009}).  However, the Potts representations are defined only for particular specialisations of the algebra parameters $\delta,\delta_L,\delta_R,\kappa_L,\kappa_R,\kappa$ and neither they nor the XXZ representations are faithful.  In contrast, the representation $\mathcal{R}$ is defined over a ring $\mathcal{A}$ which allows for base changes to many different specialisations. These properties suggest that $\mathcal{R}$ may be specialised to a full-tilting module over any field $k$ that is also an $\mathcal{A}$-algebra.  In a second paper we will show that this can in fact be done.   

Constructing such a module is the first step in extending the 'virtual algebraic Lie theory' programme, begun for the (ordinary) blob algebra $b_n$ in \cite{martinryomhansen2004}, to the symplectic blob algebra.  The endomorphism algebra of a full-tilting module, called the Ringel dual, provides significant information about the representation theory of the original algebra \cite{dengduparshallwang}.  The construction of a Ringel dual would therefore provide a useful tool for further study of the non-generic representation theory of the symplectic blob algebra, which is presently not well understood \cite{greenmartinparker2008}.  

The focus of the present paper is to establish that the representation $\mathcal{R}$ can be constructed for all $n \in \mathbb{N}$, and that it passes to a representation $\mathcal{R}_{\uline{\Sigma}}$ of any specialisation $\sbak(\delta, \delta_L, \delta_R, \kappa_L, \kappa_R, \kappa) := k \otimes_{\mathcal{Z}} \sba$.   

\subsection{Overview}

In section 2 we define the symplectic blob algebra $\sba$ over a commutative ring $\mathcal{Z}$ (defined below) and discuss the 'unfolding' map of \cite{greenmartinparker2007} in some more detail.  The main result of this section is Theorem \ref{thm:rep}:  For $\mathcal{A}$ a particular commutative ring, we define an $\mathcal{A}$-module $V$, and use this to construct a module $V^{\otimes 4n}$ for the symplectic blob algebra $\sbaA := \mathcal{A} \otimes_{\mathcal{Z}} \sba$, for each $n \in \mathbb{N}$.  

In section 3 we consider specialisations of the algebra over an algebraically closed field $k$.  For $\uline{\Pi} = (\delta, \delta_L, \delta_R, \kappa_L, \kappa_R, \kappa) \in k^{6}$ fixed but arbitrary, and $\theta : \mathcal{Z} \rightarrow k$ a ring homomorphism such that
\begin{displaymath}
 \left( \theta(X_1), \theta(X_2), \theta(X_3), \theta(X_4), \theta(X_5), \theta(X_6) \right) = \uline{\Pi} \rm{,}
\end{displaymath}
 making $\mathcal{Z}$ into a $k$-algebra, we define $\sbak(\uline{\Pi}) := k \otimes_{\mathcal{Z}} \sba$.  Our main result is Corollary \ref{corl:repexists}: the $\sbaA$-module $V^{\otimes 4n}$ passes to a $\sbak(\uline{\Pi})$-module $\mathcal{V}(n)$ for every $\uline{\Pi} \in k^6$.
 
\subsection{Notation}

Let $\mathbb{N} = \{0,1,2,\ldots\}$ be the usual natural numbers, let $k$ be an algebraically closed field of characteristic $0$, and define commutative rings $\mathcal{Z}$ and $\mathcal{A}$ by
\begin{displaymath}
 \mathcal{Z} := \mathbb{Z}[X_1, X_2, X_3, X_4, X_5, X_6] \rm{,}
\end{displaymath}
\begin{displaymath}
 \mathcal{A} := \Zatow \rm{.}
\end{displaymath}

We will write $\uline{\Pi}$ for the $6$-tuple $(\delta, \delta_L, \delta_R, \kappa_L, \kappa_R, \kappa) \in k^{6}$, and $\uline{\Sigma}$ for the $8$-tuple $(a_0, b_0, c_0, d_0, x_0, y_0, z_0, w_0) \in k^{8}$.

Let $\delta(x,y)$ be the Kronecker delta, so that
\begin{align*}
 \delta(x,y) &= \left\{ \begin{array}{lr} 
        1 & \text{if } x = y \\ 
        0 & \text{if } x \neq y \end{array} \right. \rm{,}
\end{align*}
and let $\delta`(x,y) = 1 -  \delta(x,y)$.

For any invertible $q$ in a commutative ring, and for any $n \in \mathbb{N} \setminus \{ 0 \}$, let $[n]_q = q^{n-1} + q^{n-3} + \ldots + q^{1-n}$.  In particular, $[2]_q = q + q^{-1}$.  (These are the $q$-numbers or Gaussian polynomials that arise frequently in the study of quantum groups.  See, for instance, \cite{kassel}.)

For $n \in \mathbb{N}$, let $\{1,2\}^n$ be the set of all finite sequences of length $n$ in the alphabet $\{1,2\}$.

Let $V$ be the free $\mathcal{A}$-module with basis $\{v_1, v_2\}$.  Then 
\begin{displaymath}
 V^{\otimes m} := \underbrace{V \otimes_{\mathcal{A}} \ldots \otimes_{\mathcal{A}} V}_{m \mbox{ times}}
\end{displaymath}
is also a free $\mathcal{A}$-module.  As discussed in Definition \ref{def:Vmodule}, below, we will identify a basis of $V^{\otimes m}$ with the set of sequences $\{ 1,2 \}^{m}$; if $w$ is such a sequence, then we write $\uline{w}$ for the corresponding element of $V^{\otimes m}$.

\section{A Representation over \\ $\Zatow$}

\subsection{Definitions}

In this section we define the symplectic blob algebra $\sba$ over the commutative ring $\mathbb{Z}[X_1, X_2, X_3, X_4, X_5, X_6]$ in terms of generators and relations.  This presentation of the algebra is shown in \cite{greenmartinparker2011} to be isomorphic to the original definition of the algebra in terms of decorated Temperley-Lieb diagrams \cite{greenmartinparker2007}.  

\begin{figure}
 \centering
 \includegraphics[scale=0.5]{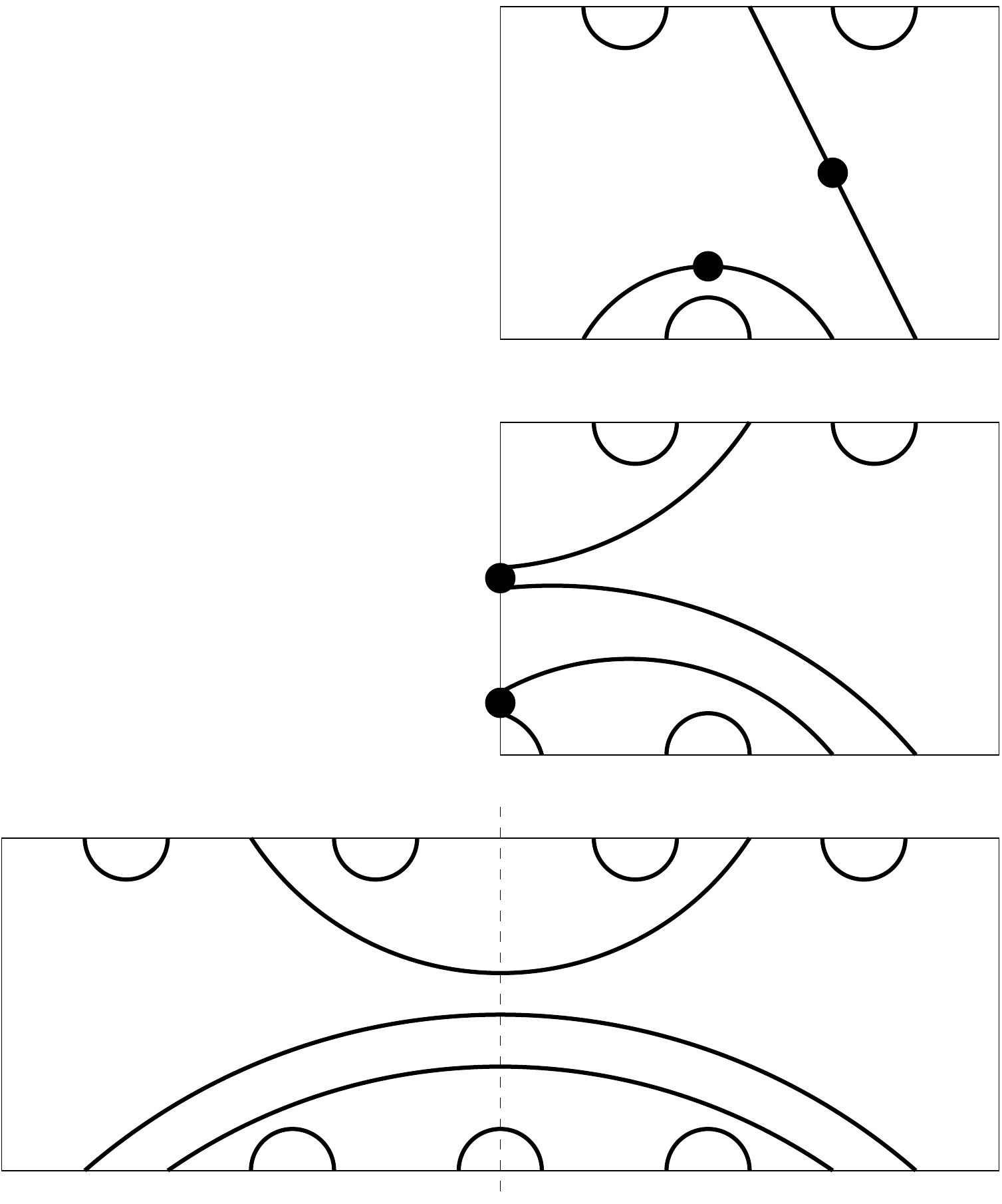}
 \caption{Unfolding A Blob Diagram}
 \label{fig:unfoldingblob}
\end{figure}

\begin{define}[The Symplectic Blob Algebra]  \label{def:sbalgebra}
Fix $n \in \mathbb{N}$. 

The symplectic blob algebra $\sba$ is the associative, unital $\mathcal{Z}$-algebra with generators $\mathcal{G}_n = \{ e, U_1, \ldots, U_{n-1}, f \}$ satisfying the relations below.
 \begin{align}
 U_i^2  &= X_1 U_i \qquad &\text{for all } i \label{rel:sba1} \\
 U_i U_j U_i &= U_i \qquad &\text{if } |i-j| = 1 \label{rel:sba9} \\ 
 U_i U_j &= U_j U_i \qquad &\text{if } |i-j| \neq 1 \label{rel:sba8} \\ 
 e^2 &= X_2 e \label{rel:sba2} \\
 f^2 &= X_3 f \label{rel:sba3} \\
 U_1 e U_1 &= X_4 U_1 \label{rel:sba4} \\
 U_{n-1} f U_{n-1} &= X_5 U_{n-1} \label{rel:sba5} \\
 e U_i &= U_i e \qquad &\text{if } i \neq 1 \label{rel:sba10} \\
 f U_i &= U_i f \qquad &\text{if } i \neq n-1 \label{rel:sba11} \\
 ef &= fe \qquad &\text{if } n > 1 \label{rel:sba12} \\
 IJI &= X_6 I \label{rel:sba6} \\
 JIJ &= X_6 J \label{rel:sba7}
\end{align}
where
\begin{equation*}
 I = \left\{ \begin{array}{lr}
              U_1 U_3 \ldots U_{n-2} f & \text{if } n \text{ is odd} \\
              U_1 U_3 \ldots U_{n-1} & \text{if } n \text{ is even}
             \end{array} \right.
\end{equation*}
and
\begin{equation*}
 J = \left\{ \begin{array}{lr}
              e U_2 \ldots U_{n-1} & \text{if } n \text{ is odd} \\
              e U_2 \ldots U_{n-2} f & \text{if } n \text{ is even}
             \end{array} \right. \text{.}
\end{equation*}
\end{define}

The Temperley-Lieb algebra $\TL_n(X_1)$ \cite{temperleylieb71} is isomorphic to the subalgebra of $\sba$ generated by $\{U_1,U_2, \ldots, U_{n-1}\}$ .  Similarly, the blob algebra $b_n(X_1,X_2,X_4)$ \cite{martinsaleur94} is isomorphic to the subalgebra of $\sba$ generated by $\{e,U_1,U_2, \ldots, U_{n-1}\}$.

\begin{remark}  \label{rmk:unfolding} These three algebras - $\TL_n$, $b_n$ and $\sba$ - are each isomorphic to a particular 'diagram algebra'  (as described, for example, in \cite{greenmartinparker2007}).  Such an algebra has a basis given by a fixed set of planar diagrams (with multiplication of two basis diagrams defined by concatenation,  followed by the application of one or more straightening rules).  This realisation of the symplectic blob algebra will not be the focus of the present paper.  However, it inspires the form of our candidate representation, as we now briefly discuss.  We assume that the reader has some familiarity with the diagram calculus of such algebras.

In \cite{martinwoodcock2003}, the authors construct a tensor space representation $\mathcal{T}$ of the blob algebra $b_n$. The construction of this representation is motivated by the XXZ representation $\mathcal{J}$ of the Temperley-Lieb algebra $\TL_n$ \cite{jimbo86} and by an ``unfolding'' map $\mu$.  This map sends basis diagrams of $b_n$ to basis diagrams of $\TL_{2n}$, as illustrated in Figure \ref{fig:unfoldingblob}.  However, it does not extend to an algebra homomorphism; $b_n$ depends on three parameters, and $\TL_{2n}$ on only one.  So the representation $\mathcal{J}$ does not lift to a representation of $b_n$; instead, the representation $\mathcal{T}$ is constructed so that, for any basis diagram $D \in b_n$, $\mathcal{T}(D)$ is mask equivalent to $\mathcal{J}(\mu(D))$ in the sense of Definition 1 of \cite{martin2003}.  The representation $\mathcal{T}$ can be shown to share many of the properties of $\mathcal{J}$ \cite{martinryomhansen2004}.

\begin{figure}
 \centering
 \includegraphics[scale=0.35]{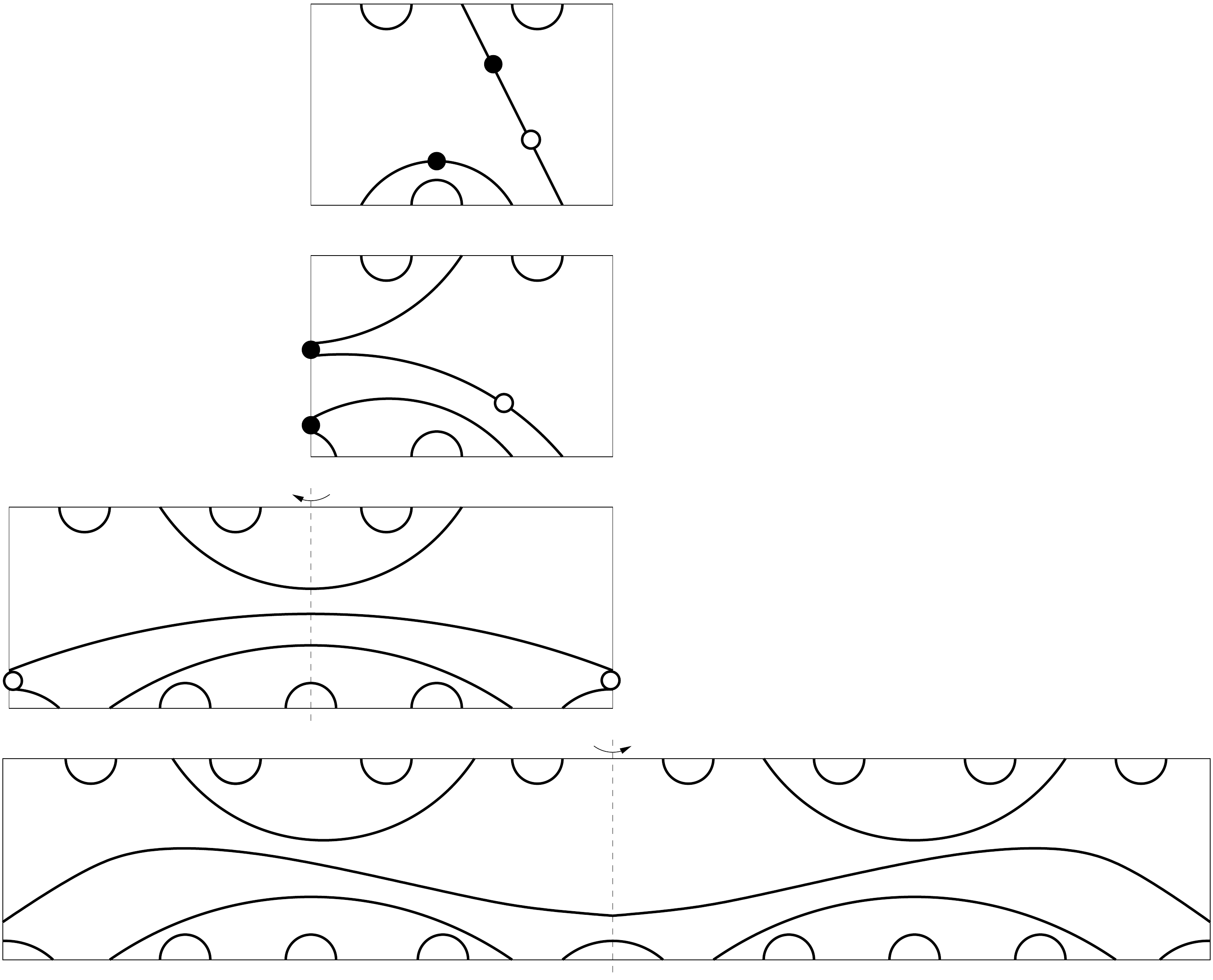}
 \caption{Unfolding A Symplectic Blob Diagram}
 \label{fig:unfoldingsymp}
\end{figure}

Symplectic blob diagrams can have two distinct type of decoration: visualised as either a solid or a hollow blob.  An unfolding map $\nu$, the natural generalisation of $\mu$, is defined in \cite{greenmartinparker2007}.  The map $\nu$ `unfolds' a symplectic blob diagram $S$ along first the left and then the right boundary, as in Figure \ref{fig:unfoldingsymp}.  This unfolding produces a \emph{periodic} Temperley-Lieb diagram \cite[Section 7]{greenmartinparker2007}.
\end{remark}

\subsection{Operators on $V^{\otimes m}$}

We now define a family of operators on $V^{\otimes 4n}$, where $V$ is a particular $\mathcal{A}$-module.  In the next section we will use these operators to construct a tensor space representation of $\sba$ for each $n \in \mathbb{N}$.

For any commutative ring $R$, let $R^{\times}$ denote the group of units.

Let $V$ be the free $\mathcal{A}$-module with basis $\{ v_1, v_2 \}$.  Then for any $m \in \mathbb{N}$, the $m$-fold tensor product
\begin{displaymath}
 V^{\otimes m} := \underbrace{V \otimes_{\mathcal{A}} \ldots \otimes_{\mathcal{A}} V}_{m \mbox{ times}} 
\end{displaymath}
has basis $\left\{ v_{i_1} \otimes \ldots \otimes v_{i_m} \; | \; i_1 \ldots i_m \in \{ 1,2 \}^{m}\right\}$.

For any $n \in \mathbb{N}$, let $I_{n} = \{ -2n+1, -2n+2, \ldots 2n \}$.

Index the factors of $V^{\otimes 4n}$ by $I_n$, letting $\alpha_{j-2n} := i_{j}$, so that a typical basis element is denoted $v_{\alpha_{-2n+1}} \otimes v_{\alpha_{-2n+2}} \otimes v_{\alpha_{2n}}$.
\begin{define}  \label{def:Vmodule}
Define a map $\uline{\;} : \{ 1,2 \}^{4n} \rightarrow V^{\otimes 4n}$ by
\begin{displaymath}
 \uline{\alpha_{-(2n-1)} \alpha_{-(2n-2)} \ldots \alpha_{2n}} := v_{\alpha_{-2n+1}} \otimes v_{\alpha_{-2n+2}} \otimes \ldots \otimes v_{\alpha_{2n}} \rm{.}
\end{displaymath}
\end{define}

We now define a family of operators on this module $V^{\otimes 4n}$.

\begin{define}  \label{def:Rmatrices}
Let $n \in \mathbb{N}$.   Let $q \in \mathcal{A}^{\times}$.

Define a family of operators $\{ R^{q}_i \}_{i \in I_{n}} \subset \End_{\mathcal{A}}( V^{\otimes 4n})$ as follows:

Let $\alpha = \uline{\alpha_{-2n+1} \ldots \alpha_{2n}} \in V^{\otimes 4n}$ be a basis element.  Then for $i \in I_{n} \setminus \{2n\}$ we define $R^{q}_{i}$ by 
\begin{align*}
 R^{q}_i \circ \alpha &= \delta'(\alpha_i,\alpha_{i+1}) \left( q^{2 - \alpha_i} \uline{\alpha_{-2n+1} \ldots 1 2 \ldots \alpha_{2n}} \right. \\
& \qquad \qquad \qquad \qquad \qquad + \left.  q^{1 - \alpha_i} \uline{\alpha_{-2n+1} \ldots 2 1 \ldots \alpha_{2n}} \right) \rm{,}
\end{align*}
and we define $R^{q}_{2n}$ by
\begin{align*} 
R^{q}_{2n} \circ \alpha &= \delta`(\alpha_{2n},\alpha_{-2n+1}) \left( q^{2 - \alpha_{2n}} \uline{2 \alpha_{-2n+2} \ldots \alpha_{2n-1} 1} \right. \\
& \qquad \qquad \qquad \qquad \qquad + \left. q^{1 - \alpha_{2n}} \uline{ 1 \alpha_{-2n+2} \ldots \alpha_{2n-1} 2 } \right) \rm{.}
\end{align*}
\end{define}

\begin{figure}
 \centering
 \includegraphics[scale=0.65]{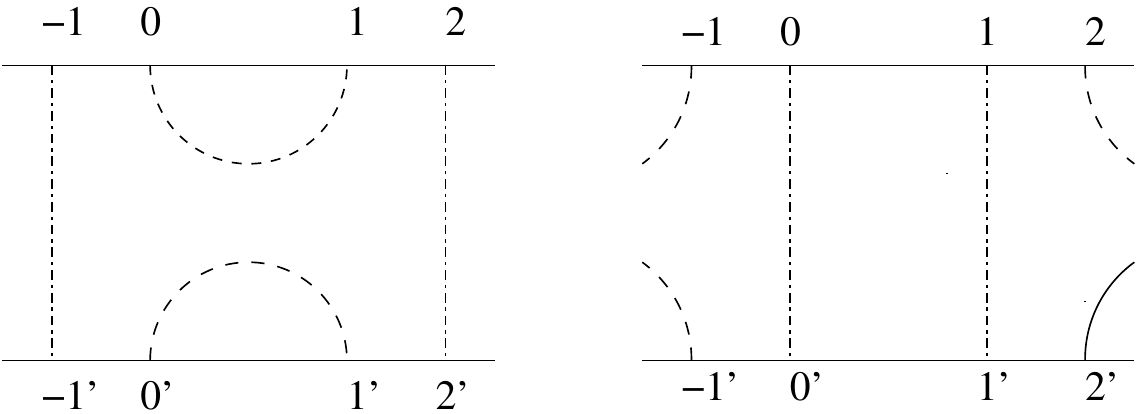}
 \caption{Index Diagrams for $R^{q}_{0}$ and $R^{q}_{2}$ in $\End_{\mathcal{A}}(V^{\otimes 4})$}
 \label{fig:indexdiagrams}
\end{figure}

In matrix notation, for $i \neq 2n$, note that
\begin{displaymath}
 R^{q}_i = \iden_{2^{i-1}} \otimes \begin{pmatrix}
                                    0 & 0 & 0 & \\
                                    0 & q & 1 & 0 \\
                                    0 & 1 & \frac{1}{q} & 0 \\
                                    0 & 0 & 0 & 0
                                   \end{pmatrix} \otimes \iden_{2^{n-1-i}} \rm{.}
\end{displaymath}
where $\iden_{m}$ is the $m$ by $m$ identity matrix.  

It will be useful to illustrate the action of these operators on $V^{\otimes 4n}$ graphically, as in Figure \ref{fig:indexdiagrams}.  Compare these with the ''chip diagrams'' used in \cite{doikoumartin2006}.  

Each factor of $V^{\otimes n}$ is represented by a labeled point on two parallel horizontal lines.  The identity matrix $\iden_{2}$ is represented by a vertical line joining the two points.  The matrix
\begin{displaymath}
\begin{pmatrix}
0 & 0 & 0 & \\
0 & q & 1 & 0 \\
0 & 1 & \frac{1}{q} & 0 \\
0 & 0 & 0 & 0
\end{pmatrix}
\end{displaymath}
is represented by a ''cup'' and ''cap'' joining two adjacent points on either line.

\subsection{The Representation $\mathcal{R}$}

The main result of this section is Theorem \ref{thm:rep}.

Pick a map $\theta : \{ D, D_L, D_R, K_L, K_R, K \} \rightarrow \mathcal{A}$, and extend this to a (unique) unital algebra homomorphism  $\mathcal{Z} \rightarrow \mathcal{A}$.  This makes $\mathcal{A}$ a $\mathcal{Z}$-algebra: $z \circ a := \theta(z) a$.  Define $\sbaA$ by
\begin{displaymath}
 \sbaA := \mathcal{A} \otimes_{\mathcal{Z}} B_n^{\prime} \rm{.}
\end{displaymath}

\begin{theorem} \label{thm:rep}
Fix $n \in \mathbb{N}$.  Let $\mathcal{A} = \Zatow{}$ and $\mathcal{Z} = \mathbb{Z}[D, D_L, D_R, K_L, K_R, K]$.  Fix a map $\theta : \mathcal{Z} \rightarrow \mathcal{A}$ that makes $\mathcal{A}$ a $\mathcal{Z}$-algebra, as above.

Let $\mathcal{G}_n = \{ e, U_1, \ldots, U_{n-1}, f \}$ be the generators of $\sba$ given in Definition \ref{def:sbalgebra}.  Let $I_{n} = \{-2n+1,-2n+2\ldots,2n\}$.  Let $\{R^q_i\}_{i \in I_{2n}} \subset \End_{\mathcal{A}} (V^{\otimes4n})$ be the operators defined in Definition \ref{def:Rmatrices}.

Define a map $\mathcal{R} : \mathcal{G}_n \rightarrow \End_{\mathcal{A}}(V^{\otimes 4n})$ by 
 \begin{align}
  \mathcal{R}(U_i) &= R^a_{-n-i} R^b_{-n+i} R^c_{n-i} R^d_{n+i} \label{tsr:eq1} \\
  \mathcal{R}(e) &= R^x_{-n} R^y_{n} \label{tsr:eq2} \\
  \mathcal{R}(f) &= R^z_0 R^w_{2n} \rm{ .} \label{tsr:eq3} 
 \end{align}
Then $\mathcal{R}$ extends to a unique representation of $\sbaA$, also called $\mathcal{R}$, if and only if the action of $\mathcal{Z}$ on $\mathcal{A}$ is such that:
 \begin{align}
  \theta(D) &= \left( a + \frac{1}{a} \right) \left( b + \frac{1}{b} \right) \left( c + \frac{1}{c} \right) \left( d + \frac{1}{d} \right) \label{thmcon1} \\
  \theta(D_L) &= \left( x + \frac{1}{x} \right) \left( y + \frac{1}{y} \right) \label{thmcon2} \\
  \theta(D_R) &= \left( z + \frac{1}{z} \right) \left( w + \frac{1}{w} \right) \label{thmcon3} \\
  \theta(K_L) &= \left( \frac{ab}{x} + \frac{x}{ab} \right) \left( \frac{cd}{y} + \frac{y}{cd} \right) \label{thmcon4} \\
  \theta(K_R) &= \left( \frac{ad}{w} + \frac{w}{ad} \right) \left( \frac{bc}{z} + \frac{z}{bc} \right) \label{thmcon5} \\
  \theta(K) &= \left\{ \begin{array}{lr}
                    \frac{xy}{zw} + 2 + \frac{zw}{xy} & \mbox{if } n \mbox{ is odd} \\
                    \frac{abcd}{xyzw} + 2 + \frac{xyzw}{abcd} & \mbox{if } n \mbox{ is even}
                   \end{array} \right. \rm{ .} \label{thmcon6}
 \end{align}
\end{theorem}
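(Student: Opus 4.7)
The plan is to verify that each defining relation of $\sbaA$ listed in Definition \ref{def:sbalgebra} holds for the proposed images of $\mathcal{R}$, extracting from each the constraint that relation puts on $\theta$.  The key structural fact is that every $R^q_i$ is rank one on its $V \otimes V$ factor at positions $(i,i+1)$, with one-dimensional image spanned by $w_q := q\,v_1 \otimes v_2 + v_2 \otimes v_1$ and $R^q(w_q) = [2]_q w_q$.  Because the four factors of $\mathcal{R}(U_i)$, the two of $\mathcal{R}(e)$, and the two of $\mathcal{R}(f)$ each act on pairwise-disjoint pairs of positions, each of $\mathcal{R}(U_i)$, $\mathcal{R}(e)$, $\mathcal{R}(f)$ is itself rank one on $V^{\otimes 4n}$, with image the tensor product of the $w_q$'s over its pairs.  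This observation drives every calculation below.

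I would dispatch the easy relations first.  The relations \eqref{rel:sba8}, \eqref{rel:sba10}, \eqref{rel:sba11}, \eqref{rel:sba12} hold because the relevant $R$-factors act on disjoint pairs and therefore commute.  After such commutations, relation \eqref{rel:sba9} splits into four independent instances of the one-parameter Temperley--Lieb identity $R^q_j R^q_{j\pm 1} R^q_j = R^q_j$, a short check on $V^{\otimes 3}$.  The idempotent relations \eqref{rel:sba1}--\eqref{rel:sba3} follow from $(R^q_i)^2 = [2]_q R^q_i$ together with pairwise commutativity of the factors of each generator, yielding conditions \eqref{thmcon1}--\eqref{thmcon3}.

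For the boundary relations \eqref{rel:sba4} and \eqref{rel:sba5}, I would commute operators across disjoint position ranges to factor $\mathcal{R}(U_1 e U_1)$ as a product of two five-operator sandwiches of the shape $R^p_j R^q_{j+2} R^r_{j+1} R^p_j R^q_{j+2}$ on four consecutive tensor factors.  Applying $R^r_{j+1}$ to the image vector $w_p \otimes w_q$ of $R^p_j R^q_{j+2}$ and re-projecting back onto $w_p \otimes w_q$ gives the sandwich identity
\begin{equation*}
  R^p_j R^q_{j+2} R^r_{j+1} R^p_j R^q_{j+2} = \left( \tfrac{pq}{r} + \tfrac{r}{pq} \right) R^p_j R^q_{j+2},
\end{equation*}
and multiplying the two bracketed factors recovers condition \eqref{thmcon4}.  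Relation \eqref{rel:sba5} is essentially identical, the only new ingredient being that the wrap-around operator $R^w_{2n}$ sits cyclically between $R^a_{-2n+1}$ and $R^d_{2n-1}$; the analogous rank-one computation on the wrapped quadruple of positions yields \eqref{thmcon5}.

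The hardest step will be the global relations \eqref{rel:sba6}, \eqref{rel:sba7}.  Both $\mathcal{R}(I)$ and $\mathcal{R}(J)$ are rank one on $V^{\otimes 4n}$, with one-dimensional images spanned by vectors $v_I^*$ and $v_J^*$; crucially their supporting pair-sets form two complementary perfect matchings of the length-$4n$ cycle with vertex set $I_n$.  Writing the rank-one operators as $\mathcal{R}(I) = v_I^*\,f_I$ and $\mathcal{R}(J) = v_J^*\,f_J$, one has $\mathcal{R}(IJI) = \operatorname{tr}(\mathcal{R}(IJ)) \cdot \mathcal{R}(I)$ and $\operatorname{tr}(\mathcal{R}(IJ)) = f_I(v_J^*)\,f_J(v_I^*)$, reducing \eqref{rel:sba6} to evaluating these two scalars.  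To compute $f_I(v_J^*)$ I would expand $v_J^*$ as a sum over the $2^{2n}$ orientations of its $J$-pairs; since $R^p_j$ annihilates any basis vector whose $(j,j+1)$-entries agree, only those orientations whose associated basis vector has unequal entries on every $I$-pair contribute.  The zigzag condition on each $I$-pair forces two neighbouring $J$-orientations to coincide, and propagating around the cycle collapses all $2n$ $J$-orientations to one common sign, leaving exactly two surviving terms and giving $f_I(v_J^*) = P^{-1} + 1$; symmetrically $f_J(v_I^*) = P + 1$ for some parameter ratio $P$ determined by which letters are shared between $I$ and $J$.  The parity of $n$ governs this sharing: for $n$ even only the $U$-generators appear in $I$ while $e$ and $f$ both sit in $J$, giving $P = abcd/(xyzw)$; for $n$ odd the letters $a,b,c,d$ appear balanced in both $I$ and $J$ and cancel, giving $P = zw/(xy)$.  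Either way $\operatorname{tr}(\mathcal{R}(IJ)) = P + 2 + P^{-1}$, matching \eqref{thmcon6}, and cyclicity $\operatorname{tr}(\mathcal{R}(IJ)) = \operatorname{tr}(\mathcal{R}(JI))$ delivers \eqref{rel:sba7} for free.  The main obstacle is the bookkeeping of this zigzag-propagation argument; everything else reduces to short rank-one calculations on $V^{\otimes m}$ with $m \le 4$.  The ``only if'' direction then follows from the same calculations read backwards.
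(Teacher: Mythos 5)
Your overall strategy matches the paper's: verify each relation of Definition~\ref{def:sbalgebra} in turn, reading off the constraint it places on $\theta$. The commuting and Temperley--Lieb relations, the idempotent relations giving \eqref{thmcon1}--\eqref{thmcon3}, and the sandwich identity for \eqref{rel:sba4}--\eqref{rel:sba5} proceed exactly as in the paper, which packages your sandwich identity as Lemma~\ref{lem:1}. Where you genuinely diverge is on the $IJI$ and $JIJ$ relations. The paper's Lemma~\ref{lem:IJ} works directly: it applies $\mathcal{O}\mathcal{E}\mathcal{O}$ to an arbitrary basis element, observes that the set of basis vectors surviving both $\mathcal{O}$ and $\mathcal{E}$ is exactly $\{\underline{1212\ldots 12},\underline{2121\ldots 21}\}$, and tracks coefficients through a chain of substitutions, then disposes of the second identity with ``follows from a very similar calculation.'' You instead note that $\mathcal{R}(I)$ and $\mathcal{R}(J)$ are each rank one (correct here, since the two perfect matchings cover all $4n$ positions), write $\mathcal{R}(IJI)=\operatorname{tr}\bigl(\mathcal{R}(IJ)\bigr)\,\mathcal{R}(I)$, and compute the trace as the pairing $f_I(v_J^*)f_J(v_I^*)$. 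This buys something the paper does not get for free: cyclicity of the trace makes \eqref{rel:sba7} follow automatically from the same scalar, so you prove one identity rather than two. The underlying combinatorics is identical --- your ``zigzag propagation leaves two surviving orientations'' is precisely the paper's $|\mathcal{S}_O\cap\mathcal{S}_E|=2$ --- but the trace framing is cleaner and exposes the $I\leftrightarrow J$ symmetry.

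One imprecision to fix: you assert at the outset that each of $\mathcal{R}(U_i)$, $\mathcal{R}(e)$, $\mathcal{R}(f)$ is rank one on $V^{\otimes 4n}$. That is false for general $n$: $\mathcal{R}(U_i)$ acts nontrivially on only $8$ of the $4n$ tensor positions and is the identity elsewhere, so its rank is $2^{4n-8}$, and $\mathcal{R}(e)$, $\mathcal{R}(f)$ have rank $2^{4n-4}$. What you actually use, and what is true, is that each such operator is rank one \emph{restricted to the tensor factors it touches}; this local statement is exactly what drives your sandwich-identity calculation. The global rank-one property holds only for $\mathcal{R}(I)$ and $\mathcal{R}(J)$, and that is the only place you invoke it in earnest, so the argument survives --- but the blanket claim should be corrected to the local form.
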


The heuristic idea is behind this choice of $\mathcal{R}$ is that if $D$ is a symplectic blob diagram realising the element $d \in \sba$, then the index diagram of $\mathcal{R}(d)$ should be of the same form as $\mu(D)$.  

Before proving this theorem we will need to record some computational lemmas.

\begin{lemma}  \label{lem:TLlike}
Let $n \in \mathbb{N}$ with $n \geq 1$.  Let $m \in I_n$ such that $m \neq 2n$.  Let $q \in \mathcal{A}^{\times}$.  Then
\begin{align}
\left( R^q_m \right)^2 &= [2]_q R^q_m \\
R^q_{m} R^{q}_{m \pm 1} R^{q}_{m} &= R^q_m \rm{.}
\end{align}
\end{lemma}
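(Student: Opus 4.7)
The plan is to reduce both identities to purely local matrix computations by exploiting the fact that each $R^q_m$ (for $m \neq 2n$) acts as the identity on $4n-2$ of the $4n$ tensor factors of $V^{\otimes 4n}$, acting only on positions $m$ and $m+1$ via the matrix
\[
 M(q) = \begin{pmatrix} 0 & 0 & 0 & 0 \\ 0 & q & 1 & 0 \\ 0 & 1 & q^{-1} & 0 \\ 0 & 0 & 0 & 0 \end{pmatrix}
\]
already recorded after Definition \ref{def:Rmatrices}. Once this local picture is in place, both relations become statements about $M(q)$ on $V^{\otimes 2}$ or $V^{\otimes 3}$.

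For the first identity, since $R^q_m = \iden \otimes M(q) \otimes \iden$ one has $(R^q_m)^2 = \iden \otimes M(q)^2 \otimes \iden$, so it suffices to show $M(q)^2 = [2]_q\, M(q)$. This follows immediately from the fact that $M(q)$ has trace $q+q^{-1}=[2]_q$ and determinant $0$ (its $2\times 2$ non-zero block is rank one), so its minimal polynomial is $t\bigl(t-[2]_q\bigr)$. A one-line check on the basis elements $v_i\otimes v_j$ confirms this.

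For the triangle identity $R^q_m R^q_{m+1} R^q_m = R^q_m$ (and symmetrically with $m-1$), the operators $R^q_m$ and $R^q_{m+1}$ share exactly the one factor $m+1$, so their composition acts trivially on all but three tensor positions. The identity therefore reduces to checking
\[
 (M(q)\otimes \iden_2)(\iden_2 \otimes M(q))(M(q)\otimes \iden_2) = M(q)\otimes \iden_2
\]
on $V^{\otimes 3}$, which is the classical XXZ braid-type relation. The verification can be shortened by observing that $M(q)$ annihilates $v_i\otimes v_i$ for $i\in\{1,2\}$, so only the four basis vectors of the form $v_i\otimes v_j\otimes v_k$ with $i\neq j$ or $j\neq k$ produce non-zero contributions; substituting the definition of $R^q_m$ and collecting the $q$-weighted summands then matches both sides.

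The main obstacle I anticipate is the cyclic wraparound: when $m=2n-1$ the operator $R^q_{m+1}=R^q_{2n}$ acts on the non-adjacent positions $2n$ and $-2n+1$, and similarly for the $m-1$ version at the left boundary. In each such case the three positions involved in the triangle relation are $m$, $m+1(=2n)$, and the wrapped partner $-2n+1$, which are not contiguous in $V^{\otimes 4n}$. However, because $R^q_m$ and $R^q_{2n}$ still overlap in exactly one factor and act as $M(q)$ on their respective pairs (with identities elsewhere), a relabeling of the three non-trivial tensor slots reduces the computation to the same $V^{\otimes 3}$ identity as in the contiguous case. Once this reduction is made explicit, no new calculation is required, and the lemma follows.
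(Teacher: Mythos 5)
The paper itself offers no proof of this lemma---it simply cites the XXZ literature (Baxter, Martin). Your proposal therefore does genuinely more: it carries out the standard localization argument that the cited references contain. The reduction of $(R^q_m)^2 = [2]_q R^q_m$ to $M(q)^2 = [2]_q M(q)$ via the factorization $R^q_m = \iden \otimes M(q) \otimes \iden$ is correct, and the observation that the nonzero $2\times 2$ block of $M(q)$ has determinant $q\cdot q^{-1}-1=0$ and trace $[2]_q$ (so minimal polynomial $t(t-[2]_q)$) gives the cleanest route. The reduction of the triangle relation to a computation on $V^{\otimes 3}$ is likewise standard and correct. Your handling of the wraparound case ($m=2n-1$, $m+1=2n$) deserves credit: you note that $R^q_{2n-1}$ and $R^q_{2n}$ still overlap in exactly one tensor slot, and that the first/second-slot conventions of $R^q_{2n}$ are consistent with the cyclic ordering (since $2n$ is the "first" and $-2n+1 \equiv 2n+1$ the "second"), so relabeling the three active slots reproduces the $V^{\otimes 3}$ identity. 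It is worth remarking, though, that in the proof of Theorem \ref{thm:rep} the lemma is only ever invoked for contiguous blocks of indices (products like $R^a_{-n-i}R^a_{-n-j}R^a_{-n-i}$ with $-n-i,-n-j \in \{-2n+1,\dots,-n-1\}$, and similarly for the other tensor slots), so the wraparound case, while correctly handled, is not strictly needed for the paper's application.

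One small imprecision: in restricting the $V^{\otimes 3}$ check you say ``only the four basis vectors $v_i\otimes v_j\otimes v_k$ with $i\neq j$ or $j\neq k$'' survive, but the condition ``$i\neq j$ or $j\neq k$'' describes six vectors, not four. What you want is that both sides of $R^q_m R^q_{m+1}R^q_m = R^q_m$ vanish on any basis vector with $i=j$, so only the four vectors with $i\neq j$ (namely $v_1\otimes v_2\otimes v_k$ and $v_2\otimes v_1\otimes v_k$, $k\in\{1,2\}$) need checking. This is a typographical slip, not a logical gap; the argument goes through as intended once the condition is stated as $i\neq j$.
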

\begin{proof}
These identities are used to prove the existence of the XXZ representation of $\TL_n$ given in the introduction.  See, for example, \cite{baxter, martin92}. 
\end{proof}

\begin{lemma}  \label{lem:1}
Let $n \in \mathbb{N}$ with $n \geq 1$.  Let $m \in I_n$ such that $-2n+1 < m < 2n$.  Then for any $q,s,t \in \mathcal{A}^{\times}$:
\begin{align} 
 \left(R^s_{m+1} R^t_{m-1}\right) R^q_m \left(R^s_{m+1} R^t_{m-1}\right) = [2]_{\frac{q}{st}} R^s_{m+1} R^t_{m-1}  \rm{ .} \label{eq:lem}
\end{align}
\end{lemma}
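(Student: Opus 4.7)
The plan is to exploit two structural facts about the operators. First, $R^s_{m+1}$ acts nontrivially only on positions $m+1, m+2$ while $R^t_{m-1}$ acts only on positions $m-1, m$, so the two commute. Second, from the matrix presentation immediately after Definition \ref{def:Rmatrices}, each $R^q_i$ has rank one on the pair of factors it touches, with image spanned by the single vector $\eta_q := q\,\uline{12} + \uline{21}$. Consequently the image of $R^s_{m+1} R^t_{m-1}$ acting on $V^{\otimes 4n}$ is exactly the set of vectors whose restriction to positions $m-1, m, m+1, m+2$ is a scalar multiple of $\eta_t \otimes \eta_s$, with the remaining $4n-4$ factors unconstrained.

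Because the same operator $R^s_{m+1}R^t_{m-1}$ appears on both sides of \eqref{eq:lem}, it then suffices to prove the single-vector identity
\begin{equation*}
R^s_{m+1} R^t_{m-1} R^q_m (\eta_t \otimes \eta_s) = [2]_{q/(st)} (\eta_t \otimes \eta_s).
\end{equation*}
This reduces the lemma to a calculation inside the four-site tensor product supported on positions $m-1, m, m+1, m+2$. I would carry it out by expanding $\eta_t \otimes \eta_s = ts\,\uline{1212} + t\,\uline{1221} + s\,\uline{2112} + \uline{2121}$, applying $R^q_m$ (which annihilates $\uline{1221}$ and $\uline{2112}$ since in each the central two letters coincide), and finally applying $R^t_{m-1}$ followed by $R^s_{m+1}$ (or vice versa, since they commute) according to Definition \ref{def:Rmatrices}.

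Upon collecting coefficients, the scalar $tsq^{-1} + q(ts)^{-1}$ appears as a common factor in front of each of the four basis vectors $\uline{1212}, \uline{1221}, \uline{2112}, \uline{2121}$; writing $u := q/(st)$ this is $u + u^{-1} = [2]_u$, which yields the right-hand side $[2]_{q/(st)} (\eta_t \otimes \eta_s)$. The only real obstacle is bookkeeping, since the three distinct parameters $q, s, t$ all enter simultaneously and produce on the order of a dozen intermediate summands; I would organise the computation by processing each of the four basis vectors independently and verifying that the common scalar $[2]_{q/(st)}$ emerges in each case, rather than attempting to manipulate $\eta_t \otimes \eta_s$ as a single vector throughout.
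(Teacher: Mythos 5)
Your proof is correct, and since the paper's own ``proof'' of Lemma \ref{lem:1} is just a citation to Proposition 6.4 of Martin--Woodcock, what you have produced is genuinely a self-contained argument rather than a paraphrase. The reduction is the key saving: because $R^s_{m+1}$ and $R^t_{m-1}$ act on disjoint pairs of tensor factors they commute, and each has image (on its pair of slots) spanned by the single vector $\eta_q = q\,\uline{12} + \uline{21}$; hence the image of $R^s_{m+1}R^t_{m-1}$ is the tensor product of $\eta_t\otimes\eta_s$ on slots $m-1,\dots,m+2$ with the full space on the remaining $4n-4$ slots, and since every operator on both sides of \eqref{eq:lem} acts as the identity away from those four slots, it suffices to verify the identity on the one vector $\eta_t\otimes\eta_s$. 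I checked the four-site computation: $R^q_m$ kills $\uline{1221}$ and $\uline{2112}$, leaves $tsq^{-1}\uline{1212} + q\,\uline{2121}$ plus two terms $ts\,\uline{1122}$ and $\uline{2211}$ that are subsequently annihilated by $R^t_{m-1}$ and $R^s_{m+1}$ respectively; and the surviving two basis vectors are mapped by $R^s_{m+1}R^t_{m-1}$ to $\eta_t\otimes\eta_s$ and $(ts)^{-1}\eta_t\otimes\eta_s$ respectively, so the total is $\bigl(tsq^{-1}+q(ts)^{-1}\bigr)\eta_t\otimes\eta_s = [2]_{q/(st)}\,\eta_t\otimes\eta_s$ as claimed. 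One small point worth flagging explicitly: the hypothesis permits $m+1=2n$, in which case $R^s_{m+1}=R^s_{2n}$ is the cyclic boundary operator on slots $2n$ and $-2n+1$; it has the identical rank-one cup--cap matrix, just placed on a cyclically adjacent pair of factors, so your argument applies verbatim after relabelling. Compared with expanding $\bigl(R^s_{m+1}R^t_{m-1}\bigr)R^q_m\bigl(R^s_{m+1}R^t_{m-1}\bigr)$ directly on an arbitrary basis vector (which is presumably closer to what Martin--Woodcock do), the rank-one observation buys you a far shorter and more transparent calculation.
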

\begin{proof}
 This result is essentially equivalent to calculations used in the proof of Proposition 6.4 in \cite{martinwoodcock2003}.
\end{proof}

The identity \eqref{eq:lem} is illustrated by the index diagram in Figure \ref{fig:indexlemma1}. 

\begin{figure}
 \centering
 \scalebox{0.65}{\input{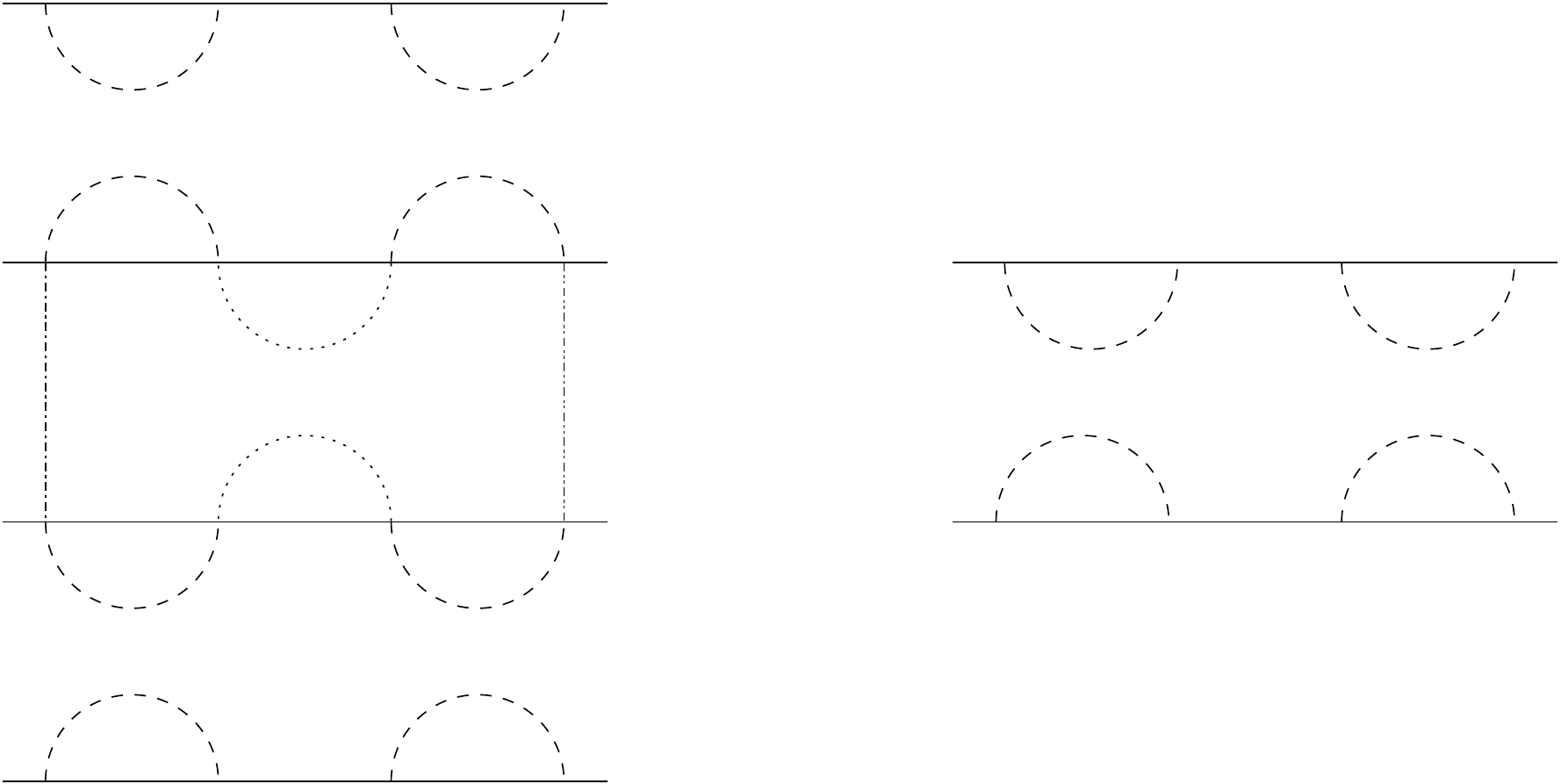_t}}
 \caption{$\left(R^s_{1} R^t_{1}\right) R^q_0 \left(R^s_{1} R^t_{-1}\right) = [2]_{\frac{q}{st}} R^s_{1} R^t_{-1}$}
 \label{fig:indexlemma1}
\end{figure}

\begin{lemma}  \label{lem:IJ}
Let $\{q_i\}_{i=-2n+1}^{2n} \subset \mathcal{A}^{\times}$. Let $R_i = R^{q_i}_i \in \End(V^{\otimes 4n})$.  Define $\mathcal{O}, \mathcal{E} \in$ $\End_{\mathcal{A}}(V^{\otimes 4n})$ by
\begin{align*}
 \mathcal{O} &= R_{-2n+1} R_{-2n+3} \ldots R_{2n-1} \\
 \mathcal{E} &= R_{-2n+2} R_{-2n+4} \ldots R_{2n} \rm{.}
\end{align*}
Let $Q \in \mathcal{A}$ be given by
\begin{equation*}
 Q = \frac{q_{-2n+1} q_{-2n+3} \ldots q_{2n-1}}{q_{-2n+2} q_{-2n+4} \ldots q_{2n}} + 2 + \frac{q_{-2n+2} q_{-2n+4} \ldots q_{2n}}{q_{-2n+1} q_{-2n+3} \ldots q_{2n-1}} \rm{.}
\end{equation*}
Then
\begin{align*}
 \mathcal{E} \mathcal{O} \mathcal{E} &= Q \mathcal{E} \\
 \mathcal{O} \mathcal{E} \mathcal{O} &= Q \mathcal{O} \rm{.}
\end{align*}
\end{lemma}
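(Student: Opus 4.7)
The plan is to exploit the fact that each $R^q_i$ is a rank-one operator on its pair of active positions, and that both $\mathcal{O}$ and $\mathcal{E}$ are products of pairwise commuting such operators whose position pairs partition $I_n$ cyclically. First, by direct inspection of the matrix form given after Definition \ref{def:Rmatrices}, and reading each pair in cyclic order (so for the wrap-around $R^q_{2n}$ the pair is taken as $(2n,-2n+1)$), every $R^q_i$ admits a uniform rank-one factorisation $R^q_i = \phi^q \otimes (\psi^q)^{\!*}$ on the two active positions, where $\phi^q := q\,v_1{\otimes}v_2 + v_2{\otimes}v_1$ and $(\psi^q)^{\!*}$ is the covector with $v_1{\otimes}v_2 \mapsto 1$ and $v_2{\otimes}v_1 \mapsto 1/q$. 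Because the $\mathcal{E}$-pairs partition $I_n$ and the factors of $\mathcal{E}$ commute pairwise, this assembles into a global rank-one factorisation $\mathcal{E} = \Phi_\mathcal{E}\,\Psi_\mathcal{E}^{\!*}$ on $V^{\otimes 4n}$ with $\Phi_\mathcal{E} := \bigotimes_k \phi^{q_{e_k}}$ and $\Psi_\mathcal{E}^{\!*} := \bigotimes_k (\psi^{q_{e_k}})^{\!*}$; analogously $\mathcal{O} = \Phi_\mathcal{O}\,\Psi_\mathcal{O}^{\!*}$.

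The three-layer product then telescopes:
\begin{displaymath}
\mathcal{E}\mathcal{O}\mathcal{E} \;=\; \Phi_\mathcal{E}\cdot \Psi_\mathcal{E}^{\!*}(\Phi_\mathcal{O})\cdot \Psi_\mathcal{O}^{\!*}(\Phi_\mathcal{E})\cdot \Psi_\mathcal{E}^{\!*} \;=\; \bigl(\Psi_\mathcal{E}^{\!*}(\Phi_\mathcal{O})\cdot \Psi_\mathcal{O}^{\!*}(\Phi_\mathcal{E})\bigr)\,\mathcal{E},
\end{displaymath}
and the symmetric calculation yields $\mathcal{O}\mathcal{E}\mathcal{O}$ equal to the same scalar times $\mathcal{O}$, since scalars commute. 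It therefore remains only to identify this scalar with $Q$.

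For that, I would expand each of the two inner products in the standard basis $\{\uline{\alpha}\}_{\alpha \in \{1,2\}^{4n}}$. A basis vector $\uline{\alpha}$ contributes to $\Psi_\mathcal{E}^{\!*}(\Phi_\mathcal{O})$ only when $\alpha$'s entries differ within every $\mathcal{E}$-pair (the condition for $\Psi_\mathcal{E}^{\!*}$ not to annihilate $\uline{\alpha}$) and within every $\mathcal{O}$-pair (the condition for $\uline{\alpha}$ to appear in the expansion of $\Phi_\mathcal{O}$). Since the union of the two pairings is precisely the edge set of a $4n$-cycle on $I_n$, this is the condition of being a proper $2$-colouring of an even cycle, so exactly two configurations contribute. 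Tallying their weights from the explicit formulae for $\phi^q$ and $(\psi^q)^{\!*}$ gives $\Psi_\mathcal{E}^{\!*}(\Phi_\mathcal{O}) = 1 + P/P'$ and $\Psi_\mathcal{O}^{\!*}(\Phi_\mathcal{E}) = 1 + P'/P$, where $P := q_{-2n+1} q_{-2n+3} \cdots q_{2n-1}$ and $P' := q_{-2n+2} q_{-2n+4} \cdots q_{2n}$; their product is $2 + P/P' + P'/P = Q$. The main point requiring care throughout is the uniformity of the rank-one factorisation across all $R^q_i$: the matrix of the wrap-around $R^q_{2n}$ written in the literal order $(\alpha_{-2n+1},\alpha_{2n})$ has $q$ and $1/q$ interchanged relative to the other $R^q_i$, and only by reading the pair cyclically as $(2n,-2n+1)$ does the uniform formula hold.
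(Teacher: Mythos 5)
Your proof is correct, and it is a genuinely different (and cleaner) route than the one in the paper, though both ultimately hinge on the same combinatorial fact.

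The paper's proof is a direct computation: it writes out $\mathcal{O}\circ\alpha$ for $\alpha$ in the set $\mathcal{S}_O$ of surviving basis vectors, applies $\mathcal{E}$, observes that $\mathcal{S}_O\cap\mathcal{S}_E = \{\uline{1212\ldots12},\uline{2121\ldots21}\}$, applies $\mathcal{O}$ again, and then simplifies the resulting double sum to extract $Q$. Your proof instead makes the structural reason explicit. Each $R^q_i$ is rank one on its active pair, so — since the $\mathcal{E}$-pairs partition the tensor positions and its factors commute — $\mathcal{E}$ itself is rank one (a tensor product of rank-one operators, reordered), and likewise $\mathcal{O}$. The triple product then telescopes to a scalar times $\mathcal{E}$ (resp. $\mathcal{O}$), and computing that scalar reduces to evaluating two pairings $\Psi_\mathcal{E}^{*}(\Phi_\mathcal{O})$ and $\Psi_\mathcal{O}^{*}(\Phi_\mathcal{E})$. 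The observation that only two basis vectors contribute — the two proper $2$-colourings of the even $4n$-cycle formed by the union of the $\mathcal{O}$- and $\mathcal{E}$-pairings — is precisely the paper's $|\mathcal{S}_O\cap\mathcal{S}_E|=2$ observation, here placed at the structural centre rather than buried in the middle of a long expansion. Your weight tally checks out: the $12\ldots12$ colouring gives $P/P'$ from $\Psi_\mathcal{E}^{*}(\Phi_\mathcal{O})$ and $1$ from $\Psi_\mathcal{O}^{*}(\Phi_\mathcal{E})$; the $21\ldots21$ colouring gives $1$ and $P'/P$ respectively, so the product is $(1+P/P')(1+P'/P)=Q$, matching the claimed $Q$. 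You are also right to flag the wrap-around subtlety: the matrix of $R^q_{2n}$ in the literal $(\alpha_{-2n+1},\alpha_{2n})$ order swaps $q$ and $1/q$, and the uniform formula $\phi^q\otimes(\psi^q)^{*}$ only holds after reading the pair cyclically as $(2n,-2n+1)$. This is exactly the one place where a careless version of this argument would break, and you handled it correctly. Your version also immediately gives $\mathcal{E}\mathcal{O}\mathcal{E}$ and $\mathcal{O}\mathcal{E}\mathcal{O}$ in one stroke with the same scalar, whereas the paper's proof relegates the second identity to ``a very similar calculation.''
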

\begin{proof}
Let $\mathcal{S}_O$ be the set of basis elements in $V^{\otimes 4n}$ which are not killed by $\mathcal{O}$.  Similarly let $\mathcal{S}_E$ be the set of basis elements which are not killed by $\mathcal{E}$.

We have:
\begin{align*}
 \mathcal{S}_O &= \{ a_{-2n-1} \ldots a_{2n} \in V^{\otimes 4n} \; | \; \delta'(\alpha_{2i},\alpha_{2i-1}) = 1 \mbox{ for } i \in \{ -(n-1), \ldots, n \} \} \\
 \mathcal{S}_E &= \{ a_{-2n-1} \ldots a_{2n} \in V^{\otimes 4n} \; | \; \delta'(a_{2i},a_{2i+1}) = 1 \mbox{ for } i \in \{-(n-1), \ldots, n \} \} \rm{,}
\end{align*}
(adopting the convention that $\alpha_{2n+1} := \alpha_{-2n+1}$) and so
\begin{align*}
 \mathcal{S}_O \cap \mathcal{S}_E = \{ \uline{1212\ldots12}, \uline{2121\ldots21} \}   \rm{.}
\end{align*}
Let $\alpha$ be an arbitrary element of $\mathcal{S}_O$.  Then
\begin{align*}
 \mathcal{O} \circ \alpha &= \sum_{v \in \mathcal{S}_O} \left( \prod_{j=-(n-1)}^{n} q_{2j-1}^{3 - \alpha_{2j-1} - v_{2j-1}} \right) v  \rm{,}
\end{align*}
and so in particular
\begin{align*}
 \mathcal{O} \circ \uline{1212\ldots12} &= \sum_{v \in \mathcal{S}_O} \left( \prod_{j=-(n-1)}^{n} q_{2j-1}^{2 - v_{2j-1}} \right) v \rm{,} \\
\mathcal{O} \circ  \uline{2121\ldots21} &= \sum_{v \in \mathcal{S}_O} \left( \prod_{j=-(n-1)}^{n} q_{2j-1}^{1 - v_{2j-1}} \right) v \rm{.}
\end{align*}

Now let $\beta$ be an arbitrary element of $\mathcal{S}_E$.  Then
\begin{align}
 \mathcal{E} \circ \beta &= \sum_{w \in \mathcal{S}_E} \left( \prod_{j=-(n-1)}^{n} q_{2j}^{3 - \alpha_{2j} - w_{2j}} \right) w \rm{,}
\end{align}
and so in particular
\begin{align*}
 \mathcal{E} \circ \uline{1212\ldots12} &= \sum_{w \in \mathcal{S}_E} \left( \prod_{j=-(n-1)}^{n} q_{2j}^{1 - w_{2j}} \right) w \rm{,} \\
 \mathcal{E} \circ \uline{2121\ldots21} &= \sum_{w \in \mathcal{S}_E} \left( \prod_{j=-(n-1)}^{n} q_{2j}^{2 - w_{2j}} \right) w \rm{.}
\end{align*}
Combining these observations, we see that for any $\alpha \in \mathcal{S}_0$
\begin{align*}
 \mathcal{E} \mathcal{O} \circ \alpha &= \mathcal{E} \circ \left( \sum_{v \in \mathcal{S}_O} \left( \prod_{j=-(n-1)}^{n} q_{2j-1}^{3 - \alpha_{2j-1} - v_{2j-1}} \right) v \right) \\
 &= \prod_{j=-(n-1)}^n q_{2j-1}^{2-\alpha_{2j-1}} \left( \mathcal{E} \circ \uline{1212\ldots12} \right)  \\
& \qquad + \prod_{j=-(n-1)}^n q_{2j-1}^{1-\alpha_{2j-1}} \left( \mathcal{E}  \circ \uline{2121\ldots21} \right) \\
 &= \prod_{j=-(n-1)}^n q_{2j-1}^{2-\alpha_{2j-1}} \sum_{w \in \mathcal{S}_E} \left( \prod_{j=-(n-1)}^{n} q_{2j}^{1 - w_{2j}} \right) w \\
& \qquad + \prod_{j=-(n-1)}^n q_{2j-1}^{1-\alpha_{2j-1}} \sum_{w \in \mathcal{S}_E} \left( \prod_{j=-(n-1)}^{n} q_{2j}^{2 - w_{2j}} \right) w  \rm{.}
\end{align*}
And so:
\begin{align*}
 \mathcal{O} \mathcal{E} \mathcal{O} \circ \alpha &= \prod_{j=-(n-1)}^n q_{2j-1}^{2-\alpha_{2j-1}} \mathcal{O} \circ \sum_{w \in \mathcal{S}_E} \left( \prod_{j=-(n-1)}^{n} q_{2j}^{1 - w_{2j}} \right) w \\
& \qquad + \prod_{j=-(n-1)}^n q_{2j-1}^{1-\alpha_{2j-1}} \mathcal{O} \circ \sum_{w \in \mathcal{S}_E} \left( \prod_{j=-(n-1)}^{n} q_{2j}^{2 - w_{2j}} \right) w  \\
 &= \prod_{j=-(n-1)}^n \frac{q_{2j-1}^{2-\alpha_{2j-1}}}{q_{2j}} \mathcal{O} \circ \uline{1212\ldots12} + \prod_{j=-(n-1)}^n q_{2j-1}^{2-\alpha_{2j-1}}  \mathcal{O}  \circ \uline{2121\ldots21} \\
& + \prod_{j=-(n-1)}^n q_{2j-1}^{1-\alpha_{2j-1}} \mathcal{O} \circ \uline{1212\ldots12} + \prod_{j=-(n-1)}^n q_{2j-1}^{1-\alpha_{2j-1}} q_{2j} \mathcal{O} \circ \uline{2121\ldots21}
\end{align*}
\begin{align*}
 &= \left( \prod_{j=-(n-1)}^n \frac{q_{2j-1}^{2-\alpha_{2j-1}}}{q_{2j}} + \prod_{j=-(n-1)}^n q_{2j-1}^{1-\alpha_{2j-1}} \right) \mathcal{O} \circ \uline{1212\ldots12} \\
& + \left( \prod_{j=-(n-1)}^n q_{2j-1}^{2-\alpha_{2j-1}} + \prod_{j=-(n-1)}^n q_{2j-1}^{1-\alpha_{2j-1}} q_{qj} \right) \mathcal{O} \circ \uline{2121\ldots21} \\
 &= \left( \prod_{j=-(n-1)}^n \frac{q_{2j-1}^{2-\alpha_{2j-1}}}{q_{2j}} + \prod_{j=-(n-1)}^n q_{2j-1}^{1-\alpha_{2j-1}} \right) \left( \sum_{v \in \mathcal{S}_O} \left( \prod_{j=-(n-1)}^{n} q_{2j-1}^{2 - v_{2j-1}} \right) v  \right) \\
& + \left( \prod_{j=-(n-1)}^n q_{2j-1}^{2-\alpha_{2j-1}} + \prod_{j=-(n-1)}^n q_{2j-1}^{1-\alpha_{2j-1}} q_{2j} \right) \left( \sum_{v \in \mathcal{S}_O} \left( \prod_{j=-(n-1)}^{n} q_{2j-1}^{1 - v_{2j-1}} \right) v \right) \\
&= \left( \prod_{j=-(n-1)}^n \frac{q_{2j-1}}{q_{2j}} + 1 \right) \left( \sum_{v \in \mathcal{S}_O} \left( \prod_{j=-(n-1)}^{n} q_{2j-1}^{3 - \alpha_{2j-1} - v_{2j-1}} \right) v  \right) \\
& + \left( 1 + \prod_{j=-(n-1)}^n \frac{q_{2j}}{q_{2j-1}} \right) \left( \sum_{v \in \mathcal{S}_O} \left( \prod_{j=-(n-1)}^{n} q_{2j-1}^{3 - \alpha_{2j-1} - v_{2j-1}} \right) v \right) \\
\mathcal{O}\mathcal{E}\mathcal{O} &= \left( \prod_{j=-(n-1)}^n \frac{q_{2j-1}}{q_{2j}} + 2 + \prod_{j=-(n-1)}^n \frac{q_{2j}}{q_{2j-1}} \right) \left( \left( \prod_{j=-(n-1)}^{n} q_{2j-1}^{3 - \alpha_{2j-1} - v_{2j-1}} \right) v \right) \\
&= Q \mathcal{O} \circ \alpha
\end{align*}
which was the claimed result.

The second part of the lemma follows from a very similar calculation.
\end{proof}

The identity $\mathcal{O} \mathcal{E} \mathcal{O} = Q \mathcal{O}$ is illustrated for $n=2$ in Figure \ref{fig:indexlemma2}.

\begin{figure}
 \centering
 \scalebox{0.3}{\input{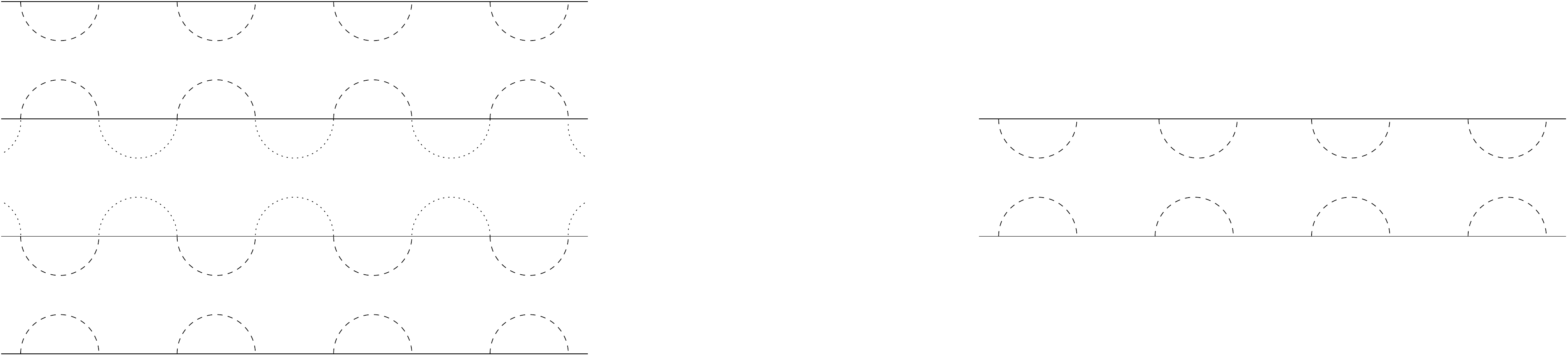_t}}
 \caption{$\mathcal{O} \mathcal{E} \mathcal{O} = Q \mathcal{O}$ for $n=2$}
 \label{fig:indexlemma2}
\end{figure}

We can now prove Theorem \ref{thm:rep}.

\begin{proof}[Proof of Theorem \ref{thm:rep}]
We have to show that the map $\mathcal{R}$ preserves the algebra relations given in Definition \ref{def:sbalgebra} if and only if the parameters $a,b,c,d,x,y,z,w$ satisfy conditions \eqref{thmcon1} through \eqref{thmcon6}.

First we note that for any invertible $q,r$ the operators $R^q_i$ and $R^{r}_j$ commute when $|i-j| \neq 1$.  Therefore $\mathcal{R}$ always preserves relations \eqref{rel:sba8}, \ref{rel:sba10}, \eqref{rel:sba11} and \eqref{rel:sba12} (whatever the specific action $\theta$).

By Lemma \ref{lem:TLlike} we have that, for any invertible $q \in \mathcal{A}$, $(R^{q}_{i})^2 = [2]_q R^q_i$.  This, together with the observation above, suffices to show that 
\begin{align*}
 \mathcal{R}(U_i) \mathcal{R}(U_i) &= \left( R^a_{-n-i} R^b_{-n+i} R^c_{n-i} R^d_{n+i} \right) \left( R^a_{-n-i} R^b_{-n+i} R^c_{n-i} R^d_{n+i} \right) \\
 &= \left( R^a_{-n-i} \right)^2 \left( R^b_{-n+i} \right)^2 \left( R^c_{n-i} \right)^2 \left( R^d_{n+i} \right)^2 \\
 &= [2]_a [2]_b [2]_c [2]_d \mathcal{R}(U_i) \rm{.}
\end{align*}
Hence $\mathcal{R}$ preserves relation \eqref{rel:sba1} if and only if $\delta = [2]_a [2]_b [2]_c [2]_d$ - that is, if and only if condition \eqref{thmcon1} holds.

Similar calculations show that $\mathcal{R}$ preserves relation \eqref{rel:sba2} if and only if condition \eqref{thmcon2} holds, and that $\mathcal{R}$ preserves relation \eqref{rel:sba3} if and only if condition \eqref{thmcon3} holds.

To check relations $\eqref{rel:sba4}$ and $\eqref{rel:sba5}$, we can use Lemma \ref{lem:1} to show that:
\begin{align*}
 \mathcal{R}(U_1) \mathcal{R}(e) \mathcal{R}(U_1) &= \left( R^a_{-n-1} R^b_{-n+1} R^c_{n-1} R^d_{n+1} \right) \left( R^x_{-n} R^y_n \right) \left( R^a_{-n-1} R^b_{-n+1} R^c_{n-1} R^d_{n+1} \right) \\
 &= \left( R^a_{-n-1} R^b_{-n+1} R^x_{-n} R^a_{-n-1} R^b_{-n+1} \right) \left( R^c_{n-1} R^d_{n+1} R^y_{n} R^c_{n-1} R^d_{n+1} \right) \\
 &= \left( [2]_{\frac{x}{ab}} R^a_{-n-1} R^b_{-n+1} \right) \left( [2]_{\frac{y}{cd}} R^c_{n-1} R^d_{n+1} \right) \\
&= \left( \frac{ab}{x} + \frac{x}{ab} \right) \left( \frac{cd}{y} + \frac{y}{cd} \right) R^a_{-n-1} R^b_{-n+1} R^c_{n-1} R^d_{n+1} 
\end{align*}
and
\begin{align*}
 \mathcal{R}(U_{n-1}) \mathcal{R}(f) \mathcal{R}(U_{n-1}) &= \left( R^a_{-2n+1} R^b_{-1} R^c_{1} R^d_{2n-1} \right) \left( R^z_0 R^w_{2n} \right) \left( R^a_{-2n+1} R^b_{-1} R^c_{1} R^d_{2n-1} \right) \\
 &= \left( R^b_{-1} R^c_1 R^z_0 R^c_1 R^b_{-1} \right) \left( R^a_{-2n+1} R^d_{2n-1} R^w_{2n} R^a_{-2n+1} R^d_{2n-1} \right) \\
 &= \left( [2]_{\frac{z}{cb}} R^b_{-1} R^c_1 \right) \left( [2]_{\frac{w}{ab}} R^a_{-2n+1} R^d_{2n-1} \right) \\
 &= \left( \frac{cd}{z} + \frac{z}{cd} \right) \left( \frac{ad}{w} + \frac{w}{ad} \right) R^a_{-2n+1} R^b_{-1} R^c_{1} R^d_{2n-1} \rm{.}
\end{align*}
So $\mathcal{R}$ preserves relation \eqref{rel:sba4} if and only if condition \eqref{thmcon4} holds, and $\mathcal{R}$ preserves relation \eqref{rel:sba5} if and only if condition \eqref{thmcon5} holds.

It remains to check relations $\eqref{rel:sba9}$, $\eqref{rel:sba6}$ and $\eqref{rel:sba7}$.

For the first of these, we can again use Lemma \ref{lem:TLlike}.  For $|i-j|=1$, we have
\begin{align*}
 \mathcal{R}(U_i) \mathcal{R} (U_j) \mathcal{R} (U_i) &= \left( R^a_{-n-i} R^b_{-n+i} R^c_{n-i} R^d_{n+i} \right) \left( R^a_{-n-j} R^b_{-n+j} R^c_{n-j} R^d_{n+j} \right) \\
 & \qquad \qquad \times \left( R^a_{-n-i} R^b_{-n+i} R^c_{n-i} R^d_{n+i} \right) \\
 &= \left( R^a_{-n-i} R^a_{-n-j} R^a_{-n-i} \right) \left( R^b_{-n+i} R^a_{-n+j} R^b_{-n+i} \right) \\
& \qquad \qquad \times \left( R^c_{n-i} R^c_{n-j} R^c_{n-i} \right) \left( R^d_{n+i} R^d_{n+j} R^d_{n+i} \right) \\
 &= R^a_{-n-i} R^b_{-n+i} R^c_{n-i} R^d_{n+i} \\
 &= \mathcal{R}(U_i) \rm{.}
\end{align*}
So, in fact, $\mathcal{R}$ preserves relation \eqref{rel:sba9} whatever the algebra action on $\mathcal{A}$.

Finally, for relations \eqref{rel:sba6} and \eqref{rel:sba7} we use Lemma \ref{lem:IJ}. 

For $i \in I_n, $, let $\{q_i\}$ be given by
\begin{align*}
 q_i &= \left\{ \begin{array}{ll}
                 a & \mbox{if } -2n+1 \leq i < -n \\
                 x & \mbox{if } i = -n \\
                 b & \mbox{if } -n < -i < 0 \\
                 z & \mbox{if } i = 0 \\
                 c & \mbox{if } 0 < i < n \\
                 y & \mbox{if } i = n \\
                 d & \mbox{if } n < i < 2n \\
                 w & \mbox{if } i = 2n
                \end{array}
 \right. \rm{.}
\end{align*}
Then for $n$ even, $\mathcal{O} = \mathcal{R}(I)$ and $\mathcal{E} = \mathcal{R}(J)$, while for $n$ odd $\mathcal{O} = \mathcal{R}(J)$ and $\mathcal{E} = \mathcal{R}(I)$.

If $n$ is even, we now have
\begin{align*}
 Q &= \frac{a^{\frac{n}{2}-1} b^{\frac{n}{2}-1} c^{\frac{n}{2}-1} d^{\frac{n}{2}-1}}{a^{\frac{n}{2}-2} x b^{\frac{n}{2}-2} z c^{\frac{n}{2}-2} y d^{\frac{n}{2}-2} w} + 2 + \frac{a^{\frac{n}{2}-2} x b^{\frac{n}{2}-2} z c^{\frac{n}{2}-2} y d^{\frac{n}{2}-2} w}{a^{\frac{n}{2}-1} b^{\frac{n}{2}-1} c^{\frac{n}{2}-1} d^{\frac{n}{2}-1}} \\
 &= \frac{abcd}{xyzw} + 2 + \frac{xyzw}{abcd}  \rm{,}
\end{align*}
while if $n$ is odd, we have
\begin{align*}
 Q &= \frac{a^\frac{n-1}{2} x b^\frac{n-1}{2} c^\frac{n-1}{2} y d^\frac{n-1}{2}}{a^\frac{n-1}{2} b^\frac{n-1}{2} z c^\frac{n-1}{2} d^\frac{n-1}{2} w} + 2 + \frac{{a^\frac{n-1}{2} b^\frac{n-1}{2} z c^\frac{n-1}{2} d^\frac{n-1}{2} w}}{a^\frac{n-1}{2} x b^\frac{n-1}{2} c^\frac{n-1}{2} y d^\frac{n-1}{2}} \\
 &= \frac{xy}{zw} + 2 + \frac{zw}{xy}  \rm{.}
\end{align*}

Now note that for any $n \in \mathbb{N}$ we have
\begin{align*}
 \mathcal{R}(I)\mathcal{R}(J)\mathcal{R}(I) &= Q \mathcal{R}(I) \rm{,} \\
 \mathcal{R}(J)\mathcal{R}(I)\mathcal{R}(J) &= Q \mathcal{R}(J) \rm{.}
\end{align*}

Hence, relations \eqref{rel:sba6} and \eqref{rel:sba7} hold if and only if $Q = \kappa$.  That is, if and only if condition \eqref{thmcon6} holds. 

This completes the proof of the theorem. \end{proof}

\section{Specialisations to a field $k$}

We have now constructed a (unique) tensor space representation $\mathcal{R}$ of $\sbaA$ over the ring $\mathcal{A}$, acting on the $\mathcal{A}$-module $V^{\otimes 4n}$.  However, as discussed in the introduction, we are really interested in representations of the symplectic blob algebra over a field, $\sba := k \otimes B_n^{\prime}$.  

Fix six arbitrary elements $\{ \delta, \delta_L, \delta_R, \kappa_L, \kappa_R, \kappa \} \subset k$. There is a unique unital ring homomorphism $\rho : \mathcal{Z} \rightarrow k$ satisfying $\rho(D) = \delta$, $\rho(D_L) = \delta_L$, $\rho(D_R) = \delta_R$, $\rho(K_L) = \kappa_L$, $\rho(K_R) = \kappa_R$ and $\rho(K) = \kappa$; this map makes $k$ into a $\mathcal{Z}$-algebra.  For $k$ such an algebra, define
\begin{displaymath}
 \sba(\uline{\Pi}) := k \otimes_{\mathcal{Z}} B_n^{\prime} \rm{.}
\end{displaymath}
 where $\uline{\Pi} := (\delta, \delta_L, \delta_R, \kappa_L, \kappa_R, \kappa)$.

In this section we show that, for any choice of $\uline{\Pi} \in k^{6}$, we may define a map $\iota : \mathcal{A} \rightarrow k$, making $k$ into an $\mathcal{A}$-algebra, such that 
\begin{displaymath}
 \mathcal{V}(n) := k \otimes_{\mathcal{A}} V^{\otimes 4n}
\end{displaymath}
is a $\sba(\uline{\Pi})$-module.

Let $\{ a_0,b_0,c_0,d_0,x_0,y_0,z_0,w_0 \} \subset k$, and define $\uline{\Sigma} := (a_0, b_0, c_0, d_0, x_0, y_0, z_0, w_0)$.  Then there is a unique (unital) ring homomorphism, $\iota : \mathcal{A} \rightarrow k$, which sends $a$ to $a_0$, $b$ to $b_0$ and so on.  This map makes $k$ into an $\mathcal{A}$-algebra.  As $\mathcal{A}$ is itself a $\mathcal{Z}$-algebra, this also makes $k$ into a $\mathcal{Z}$-algebra (via the map $\iota \circ \theta : \mathcal{Z} \rightarrow k$).  So we can define a symplectic blob algebra over this field, as $k \otimes_{\mathcal{A}} \sbaA$.

Suppose that $\uline{\Pi}$ and $\uline{\Sigma}$ are chosen such that $\rho = \iota \circ \theta$; that is, suppose that the diagram below commutes.

\begin{displaymath}
 \xymatrix{ \mathcal{Z} \ar[rr]^{\theta} \ar[dr]_{\rho} & & \mathcal{A} \ar[dl]^{\iota} \\
& k &
}
\end{displaymath}

In this case, the algebra $\sbak(\uline{\Pi}) = k \otimes_{\mathcal{A}} \sbaA$ and acts on the $k$-module $k \otimes_{\mathcal{A}} V^{\otimes 4n}$.  We have the following result:

\begin{prop}  \label{prop:repoverk}
 Suppose that $\rho = \iota \circ \theta$.  Let $\mathcal{V}(n) := k \otimes_{\mathcal{A}} V^{\otimes 4n}$.  Define a map $\mathcal{R}_{\uline{\Sigma}} : \sbak \rightarrow \End_k(\mathcal{V}(n))$ by
\begin{displaymath}
 \mathcal{R}_{\uline{\Sigma}} := \iden_k \otimes_{\mathcal{A}} \mathcal{R} \rm{,}
\end{displaymath}
where $\mathcal{R}$ is the representation of $\sbaA$ defined in Theorem \ref{thm:rep}.

Then $\mathcal{R}_{\uline{\Sigma}}$ is a representation of $\sbak(\uline{\Pi})$.
\end{prop}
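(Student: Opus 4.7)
The plan is to recognize Proposition \ref{prop:repoverk} as essentially a base-change statement: the hypothesis $\rho = \iota \circ \theta$ is precisely what is needed to identify the $\mathcal{Z}$-algebra structure on $k$ coming directly from $\rho$ with the composite $\mathcal{Z}$-algebra structure factoring through $\mathcal{A}$, so that extending scalars along $\iota$ turns the $\sba^{\prime,\mathcal{A}}$-module $V^{\otimes 4n}$ into a $\sba^{\prime,k}(\uline{\Pi})$-module.

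First I would use the commutativity of the triangle to establish the algebra isomorphism
\begin{displaymath}
 k \otimes_{\mathcal{A}} \sba^{\prime,\mathcal{A}} \;=\; k \otimes_{\mathcal{A}} \bigl( \mathcal{A} \otimes_{\mathcal{Z}} \sba \bigr) \;\cong\; k \otimes_{\mathcal{Z}} \sba \;=\; \sba^{\prime,k}(\uline{\Pi}) \rm{,}
\end{displaymath}
where the middle isomorphism is the usual associativity of tensor product, valid because the $\mathcal{Z}$-action on $k$ obtained as $\iota \circ \theta$ coincides (by hypothesis) with the one used to define $\sba^{\prime,k}(\uline{\Pi})$ via $\rho$. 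Under this identification, a generator $U_i$ (respectively $e$, $f$) of $\sba^{\prime,k}(\uline{\Pi})$ corresponds to $1 \otimes_{\mathcal{A}} (1 \otimes_{\mathcal{Z}} U_i)$ on the left hand side.

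Next I would invoke the standard functoriality of base change for modules: given any $\mathcal{A}$-algebra $B$ (here $B = \sba^{\prime,\mathcal{A}}$), any $B$-module $M$ (here $M = V^{\otimes 4n}$), and any $\mathcal{A}$-algebra $k$, the $\mathcal{A}$-module $k \otimes_{\mathcal{A}} M$ inherits the structure of a $(k \otimes_{\mathcal{A}} B)$-module via $(\lambda \otimes b) \cdot (\mu \otimes m) := \lambda\mu \otimes bm$, and the map $\iden_k \otimes_{\mathcal{A}} \phi_B : k \otimes_{\mathcal{A}} B \to \End_k(k \otimes_{\mathcal{A}} M)$ obtained by base-changing the original action $\phi_B : B \to \End_{\mathcal{A}}(M)$ is a $k$-algebra homomorphism. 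Applying this with $\phi_B = \mathcal{R}$ from Theorem \ref{thm:rep} yields directly that $\mathcal{R}_{\uline{\Sigma}} = \iden_k \otimes_{\mathcal{A}} \mathcal{R}$ is a $k$-algebra homomorphism from $k \otimes_{\mathcal{A}} \sba^{\prime,\mathcal{A}}$ to $\End_k(\mathcal{V}(n))$.

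Finally, composing with the algebra isomorphism of the first step identifies $\mathcal{R}_{\uline{\Sigma}}$ with a representation $\sba^{\prime,k}(\uline{\Pi}) \to \End_k(\mathcal{V}(n))$. There is essentially no obstacle here beyond careful bookkeeping; the only nontrivial content is the observation that the equation $\rho = \iota \circ \theta$ is exactly what makes the tensor-product-associativity isomorphism $k \otimes_{\mathcal{A}} (\mathcal{A} \otimes_{\mathcal{Z}} \sba) \cong k \otimes_{\mathcal{Z}} \sba$ well-defined as $k$-algebras, and all the algebra relations of Definition \ref{def:sbalgebra} are automatically preserved by $\mathcal{R}_{\uline{\Sigma}}$ because they are already preserved by $\mathcal{R}$ and tensoring with $k$ over $\mathcal{A}$ is a (right-exact, monoidal) functor.
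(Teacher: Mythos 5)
Your proof is correct and takes the same approach as the paper: the paper's own proof is a one-line appeal to Theorem \ref{thm:rep} together with the preceding discussion, which already records the identification $\sbak(\uline{\Pi}) = k \otimes_{\mathcal{A}} \sbaA$ coming from $\rho = \iota \circ \theta$. You have simply written out in full the base-change functoriality and tensor-associativity isomorphism that the paper treats as immediate.
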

\begin{proof}
 In light of the discussion above, this is an immediate corollary of Theorem \ref{thm:rep}.
\end{proof}

The discussion above suggests that we must be careful in our choice of both $\uline{\Pi}$ and $\uline{\Sigma}$.  In fact, however, we will now show that for \emph{any} arbitrary choice of $\uline{\Pi}$ there exists a $\uline{\Sigma}$ such that $\rho = \iota \circ \theta$.  That is, we can construct a tensor space module $\mathcal{V}(n)$ for any specialisation $\sbak(\uline{\Pi})$ of the symplectic blob algebra over an algebraically closed field $k$.

\begin{prop}   \label{corl:repexists}
 Suppose that $k$ is an algebraically closed field.  Fix $\uline{\Pi} \in k^6$.   Then there exists some $\uline{\Sigma} = (a_0, b_0, c_0, d_0, x_0, y_0, z_0, w_0) \in k^{8}$ such that $\rho = \iota \circ \theta$ and $\mathcal{R}_{\uline{\Sigma}}$ is a representation of $\sbak(\uline{\Pi})$.
\end{prop}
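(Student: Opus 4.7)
\noindent By Proposition \ref{prop:repoverk}, it suffices to produce $\uline{\Sigma} \in (k^\times)^8$ such that $\rho = \iota \circ \theta$; unpacking Theorem \ref{thm:rep}, this amounts to solving the six polynomial equations obtained by equating the left hand sides of \eqref{thmcon1}--\eqref{thmcon6} to $\delta, \delta_L, \delta_R, \kappa_L, \kappa_R, \kappa$ respectively, in the eight units $a_0, b_0, c_0, d_0, x_0, y_0, z_0, w_0$. The main input is algebraic closure: for any $v \in k$, the quadratic $q^2 - v q + 1 = 0$ has two roots in $k$, both nonzero (their product is $1$), so the map $q \mapsto q + q^{-1}$ sends $k^\times$ onto $k$. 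Since every building block in the six equations is of the form $[2]_{(\cdot)}$, I can invoke this surjectivity repeatedly to solve for arguments.

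\medskip

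My plan is to construct $\uline{\Sigma}$ sequentially. First, I would rewrite \eqref{thmcon6} as $r + r^{-1} = \kappa - 2$ for the appropriate monomial $r$ (equal to $x_0 y_0 / (z_0 w_0)$ when $n$ is odd, or $a_0 b_0 c_0 d_0 / (x_0 y_0 z_0 w_0)$ when $n$ is even), and fix $r \in k^\times$ by the quadratic formula. Next, normalise $y_0 := 1$, which turns \eqref{thmcon2} into $[2]_{x_0} = \delta_L / 2$ and pins down $x_0 \in k^\times$. Reparametrise the left-hand units via $P := a_0 b_0$, $Q := c_0 d_0$, $U := a_0 d_0$, $V := P Q / U = b_0 c_0$, and $\eta := a_0 / b_0$ (so that $c_0/d_0 = \eta P Q / U^2$ is also determined); the remaining equations then become cleaner conditions on $(P, Q, U, \eta)$ and $(z_0, w_0)$. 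I solve \eqref{thmcon4} by fixing $P := x_0$ and taking $[2]_Q = \kappa_L/2$; the constraint from \eqref{thmcon6} then fixes the product $z_0 w_0$ (to $x_0/r$ in the odd case, or $Q/r$ in the even case), which combined with \eqref{thmcon3} reduces to a quadratic in $z_0^2$ whose constant term is a nonzero monomial, hence solvable in $k^\times$. Equation \eqref{thmcon5} then reduces similarly to a quadratic in $U^2$, giving $U \in k^\times$; and finally \eqref{thmcon1}, rewritten in $\eta$, becomes a degree-$4$ polynomial whose constant term is $1$, so has a root $\eta \in k^\times$. From $(P, Q, U, \eta)$ one then recovers the original coordinates as $a_0^2 = P \eta$, $b_0 = P/a_0$, $d_0 = U/a_0$, $c_0 = V/b_0$, all in $k^\times$.

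\medskip

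The main obstacle is ensuring that each quadratic or quartic polynomial produced along the way has nonzero constant term, so that algebraic closure delivers a root in $k^\times$ rather than a vanishing one --- a requirement, since $\iota : \mathcal{A} \to k$ out of the Laurent polynomial ring $\mathcal{A} = \Zatow$ is only well-defined if the images of the indeterminates are units. In each case, the constant term turns out (up to sign) to be a Laurent monomial in units already selected, hence automatically nonzero, so the induction never breaks down and the construction produces $\uline{\Sigma} \in (k^\times)^8$ with the required property.
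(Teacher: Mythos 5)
Your proof is correct and rests on the same underlying strategy as the paper's: reduce, via Proposition \ref{prop:repoverk}, to solving the six equations \eqref{thmcon1}--\eqref{thmcon6} in eight units, introduce auxiliary monomial variables, and build $\uline{\Sigma}$ sequentially by solving a chain of quadratics (plus one quartic) whose constant terms are nonzero monomials in already-chosen units, invoking algebraic closure at each step. The parametrizations differ, though. The paper works with $E = abcd$, $F = ab/cd$, $G = ad/bc$ together with $zw$ and $z/w$, fixes $x_0$ freely, and then forces $e_0 = x_0^2 y_0^2$ so that the odd- and even-$n$ versions of \eqref{thmcon6} become the \emph{same} equation and can be solved once; the final step is a degree-$8$ (degree-$4$ in $d_0^2$) equation in $d_0$. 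You instead use $P = ab$, $Q = cd$, $U = ad$, $\eta = a/b$, normalize $y_0 = 1$ and $P = x_0$, treat the parities of $n$ as separate cases in \eqref{thmcon6}, and finish with a quartic in $\eta$; your normalizations make several of the intermediate quadratics monic with constant term $1$, which is marginally cleaner. One trade-off worth flagging: your normalizations introduce divisions by $2$ (e.g.\ $[2]_{x_0} = \delta_L/2$), so your construction genuinely needs $\operatorname{char}k \neq 2$, whereas the paper's version nominally avoids this but silently requires $x_0^2 + 1 \neq 0$ when it passes to equation \eqref{eq:corlsolv1}; since the paper globally assumes $\operatorname{char}k = 0$, both assumptions are harmless here, but your version would need this caveat if one wanted the statement over fields of arbitrary characteristic.
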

\begin{proof}
Consider the image of relations (\ref{thmcon1}) through (\ref{thmcon6}) under the map $\iota$.  We want to show that, for $\uline{\Pi}$ fixed but arbitrary, there must exist some $\uline{\Sigma}$ such that:
\begin{align}
  \delta &= \left( a_0 + \frac{1}{a_0} \right) \left( b_0 + \frac{1}{b_0} \right) \left( c_0 + \frac{1}{c_0} \right) \left( d_0 + \frac{1}{d_0} \right) \label{corlcon1} \\
  \delta_L &= \left( x_0 + \frac{1}{x_0} \right) \left( y_0 + \frac{1}{y_0} \right) \label{corlcon2} \\
  \delta_R &= \left( z_0 + \frac{1}{z_0} \right) \left( w_0 + \frac{1}{w_0} \right) \label{corlcon3} \\
  \kappa_L &= \left( \frac{a_0 b_0}{x_0} + \frac{x_0}{a_0 b_0} \right) \left( \frac{c_0 d_0}{y_0} + \frac{y_0}{c_0 d_0} \right) \label{corlcon4} \\
  \kappa_R &= \left( \frac{a_0 d_0}{w_0} + \frac{w_0}{a_0 d_0} \right) \left( \frac{b_0 c_0}{z_0} + \frac{z_0}{b_0 c_0} \right) \label{corlcon5} \\
  \kappa &= \frac{x_0 y_0}{z_0 w_0} + 2 + \frac{z_0 w_0 }{x_0 y_0} \label{corlcon6a} \\
  \kappa &= \frac{a_0 b_0 c_0 d_0}{x_0 y_0 z_0 w_0} + 2 + \frac{x_0 y_0 z_0 w_0}{a_0 b_0 c_0 d_0} \rm{.} \label{corlcon6b}
 \end{align}
(Note that this is a slightly stronger result than is needed -- it would suffice to show that either one of the last two conditions is satisfied, depending on the polarity of $n$ -- but in fact the stronger result holds and the resulting proof is slightly more compact.)

Our approach is to rewrite the conditions above in the form $F_1(X_1) = 0$, $F_2(X_1, X_2) = 0$, $F_3(X_1, X_2, X_3) = 0$, and so on, where each $F_i$ is a polynomial in the indeterminates $X_1, \ldots X_i$.  Since $k$ is algebraically closed, we can solve the first equation for $X_1$, then solve the second for $X_2$, and so on, at each stage checking that the solution obtained is invertible (that is, non-zero).

Let $E := ABCD$, $F := \frac{AB}{CD}$ and $G = \frac{AD}{BC}$.  Note that given any tuple $(D,E,F,G) \in k^4$ we can recover $(A,B,C,D)$.  Similarly, let $P = ZW$ and $Q = \frac{Z}{W}$.  Given any $(P,Q)$ we can recover $(Z,W)$.

Now let $x_0$ be any non-zero element of $k$.  Condition (\ref{corlcon2}) now states that $y_0$ is a solution of the following equation in $Y$:
\begin{equation*}
  \delta_L = \left( x_0 + \frac{1}{x_0} \right) \left( Y + \frac{1}{Y} \right) \rm{,}
\end{equation*}
or
\begin{equation}  \label{eq:corlsolv1}
 Y^2 - \left( \frac{x_0 \delta_L}{x_0^2 + 1} \right) Y + 1 = 0 \rm{.}
\end{equation}
This equation has (at least) one solution since $k$ is algebraically closed, and clearly $Y=0$ is not a solution.  So let $y_0$ be any solution of (\ref{eq:corlsolv1}).

We fix $e_0 := a_0 b_0 c_0 d_0$ such that conditions (\ref{corlcon6a}) and (\ref{corlcon6b}) are equivalent, by setting $e_0 = x_0^2 y_0^2$.  Clearly $e_0$ is non-zero.

Condition (\ref{corlcon6a}) (or equivalently (\ref{corlcon6b})) now states that $p_0 := z_0 w_0$ must be a solution of the following equation in $P$:
\begin{equation*}
 \kappa = \frac{x_0 y_0}{P} + 2 + \frac{P}{x_0 y_0} \rm{,}
\end{equation*}
or
\begin{equation} \label{eq:corlsolv3}
P^2 + \left( 2 - \kappa \right) x_0 y_0 P + e_0 = 0 \rm{.} 
\end{equation}
Clearly at least one solution of this equation exists, and $P=0$ is not a solution.  So let $p_0$ be any solution of (\ref{eq:corlsolv3}).

Condition (\ref{corlcon3}) now states that $q_0 = \frac{z_0}{w_0}$ must be a solution of the following equation in $Q$:
\begin{equation*}
\delta_R = p_0 + \frac{1}{p_0} + Q + \frac{1}{Q}  \rm{,}
\end{equation*}
or
\begin{equation}  \label{eq:corlsolv4}
 Q^2 + (p_0 + \frac{1}{p_0} - \delta_R) Q + 1 = 0 \rm{.}
\end{equation}
Once again, since $k$ is algebraically closed this equation must have at least one solution, and it is clear that $Q=0$ is not a solution.  So let $q_0$ be any solution of $(\ref{eq:corlsolv4})$.

Now condition (\ref{corlcon4}) states that $f_0 := \frac{a_0 b_0}{c_0 d_0}$ must be a solution of the following equation in $F$:
\begin{equation*}
 \kappa_L = \frac{Fy_0}{x_0} + \frac{x_0 y_0}{e_0} + \frac{e_0}{x_0 y_0} + \frac{x_0}{Fy_0} \rm{,}
\end{equation*}
or
\begin{equation}  \label{eq:corlsolv5}
 F^2 + \left( x_0^2 + \frac{1}{y_0^2} - \frac{x_0}{y_0} \kappa_L \right) F + \frac{x_0^2}{y_0^2} = 0\rm{.}
\end{equation}
Clearly at least one such solution exists, and $F=0$ is not a solution.  So let $f_0$ be a solution of (\ref{eq:corlsolv5}).

Similarly, condition (\ref{corlcon5}) states that $g_0 := \frac{a_0 d_0}{b_0 c_0}$ must be a solution of the following equation in $G$:
\begin{equation*}
\kappa_R = G_0 q_0 +  \frac{p_0}{e_0} + \frac{e_0}{p_0} + \frac{1}{G q_0}  \rm{,}
\end{equation*}
or
\begin{equation}  \label{eq:corlsolv6}
G^2 + \left( \frac{e_0}{p_0 q_0} + \frac{p_0}{e_0 q_0} - \frac{\kappa_L}{q_0} \right) G + \frac{1}{q_0^2} = 0 \rm{.}
\end{equation}
Clearly, this equation has a solution (since $k$ is algebraically closed) and $G=0$ is not such a solution.  So let $g_0$ be any solution of (\ref{eq:corlsolv6}).

Finally, condition (\ref{corlcon1}) states -- after some simplification -- that $d_0$ must be a solution of the following equation in $D$:
\begin{align}  \label{eq:corlsolv7}
\frac{f_0}{e_0 g_0} D^8 + \left( f_0 + \frac{1}{e_0} + \frac{1}{g_0} + \frac{f_0}{e_0 g_0} \right) D^6 \nonumber \\  \qquad   + \left( e_0 + f+0 + g_0 + \frac{1}{g_0} + \frac{1}{f_0} + \frac{1}{e_0} \right) D^4 \\ \qquad + \left( \frac{1}{f_0} + e_0 + g_0 + \frac{e_0 g_0}{f_0} \right) D^2 + \frac{e_0 g_0}{f_0} = 0 \nonumber \rm{.} 
\end{align}
Since $k$ is algebraically closed, this equation must have (at least) one solution, and it is not difficult to see that $D=0$ is not a solution.  So let $d_0$ be any solution of (\ref{eq:corlsolv7}).

We have now shown that, given an arbitrary $\uline{\Pi} = (\delta, \delta_L, \delta_R, \kappa_L, \kappa_R, \kappa) \in k^{6}$ and some non-zero $x_0 \in k$ there exist $y_0, e_0, p_0, q_0, f_0, g_0, d_0 \in k$ such that the $8$-tuple
\begin{displaymath}
 \uline{\Sigma} := \left( \sqrt{ \frac{e_0 g_0}{d_0^2} }, \sqrt{ \frac{f_0 d_0^2}{g_0} }, \sqrt{ \frac{e_0}{f_0 d_0^2} }, d_0, x_0, y_0, \sqrt{p_0 q_0}, \sqrt{ \frac{p_0}{q_0} }   \right)  \rm{,}
\end{displaymath}
makes $\rho = \iota \circ \theta$ and so satisfies the conditions of Proposition \ref{prop:repoverk}.  This completes the proof.
\end{proof}

\section{Discussion}

We have now constructed, for any $n \in \mathbb{N}$, a tensor space $\sbaA$ module, $V^{\otimes 4n}$  For $k$ an algebraically closed field of characteristic $0$, and $\uline{\Pi}$ any fixed (but arbitrary) $6$-tuple of parameters $(\delta, \delta_L, \delta_R, \kappa_L, \kappa_R, \kappa) \in k^{6}$ we have shown that this module passes to a $\sba(\uline{\Pi})$-module, $\mathcal{V}(n)$.

In our next paper, we will consider \emph{quasihereditary} specialisations of $\sba(\uline{\Pi})$.  The symplectic blob algebra is quasihereditary whenever each of the six parameters $\delta$, $\delta_L$, $\delta_R$, $\kappa_L$, $\kappa_R$ and $\kappa$ are non-zero \cite{greenmartinparker2007}.

We will show that, for these specialisations, the tensor space $\mathcal{A}$-module $V$ passes to a \emph{full-tilting} $k$-module $\mathcal{V}(n)$.  The construction of such a module allows for the calculation of the Ringel dual $\End_{\sba}(\mathcal{V}(n))^{\op}$ and of the (indecomposable) tilting modules $T_n(\lambda)$, which must occur as direct summands of $\mathcal{V}(n)$.  It may therefore be used to study the non-generic representation theory of $\sba$, which is -- at present -- not well understood \cite{greenmartinparker2008}.

\subsection*{Acknowledgments}

While writing this paper the author received funding from an EPSRC training grant.  He would like to thank his supervisor, Paul Martin, for introducing him to this field and suggesting this problem, and for many useful discussions and suggestions.

The author would also like to thank Elizabeth Banjo and Alison Parker, both of whom read and commented on previous drafts of this paper.

\bibliography{tilting}
\bibliographystyle{amsplain}

\end{document}